\title{Fast algorithms for the computation of Fourier extensions of arbitrary length}
\author{Roel Matthysen, Daan Huybrechs}
\newcommand{\CM}{\bar{\DM}}
\newcommand{\DM}{A}
\newcommand{\e}{e}
\renewcommand{\imath}{\mathrm{i}}
\newcommand{\frameb}{\mathcal{G}}
\newcommand{\dlambda}{\upsilon}
\newcommand{\dLambda}{\Upsilon}
\newcommand{\jn}{k}
\newcommand{\jm}{l}
\newcommand{\sn}{n}
\newcommand{\bN}{N}
\newcommand{\sm}{m}
\newcommand{\bM}{M}
\newcommand{\TRID}{Exp}
\newcommand{\PA}{Imp}
\newcommand{\cutoff}{\tau}
\newcommand{\resnorm}{$||\mathcal{F}(f)-f||$}
\newcommand{\tnlogregion}{$\sigma_i,\sigma_{NM/L},\sigma_j $}
\newtheorem{thm}{Theorem}
\newtheorem{remark}{Remark}
\newlength{\figurewidth}
\newlength{\figureheight}
\date{\today} 
\begin{document}

\maketitle

\begin{abstract}

Fourier series of smooth, non-periodic functions on $[-1,1]$ are known to
exhibit the Gibbs phenomenon, and exhibit overall slow convergence.
One way of overcoming these problems is by using a Fourier series on a larger
domain, say $[-T,T]$ with $T>1$, a technique called Fourier extension or Fourier
continuation. When constructed as the discrete least squares minimizer in
equidistant points, the Fourier extension has been shown shown to converge
geometrically in the truncation parameter $N$. A fast ${\mathcal O}(N \log^2 N)$
algorithm has been described to compute Fourier extensions for the case where
$T=2$, compared to ${\mathcal O}(N^3)$ for solving the dense discrete least
squares problem. We present two ${\mathcal O}(N\log^2 N )$ algorithms for the
computation of these approximations for the case of general $T$, made possible
by exploiting the connection between Fourier extensions and Prolate Spheroidal
Wave theory. The first algorithm is based on the explicit computation of
so-called periodic discrete prolate spheroidal sequences, while the second
algorithm is purely algebraic and only implicitly based on the theory. 
\end{abstract}

\section{Introduction}

Fourier series are a good choice for the approximation of a smooth periodic function on a bounded interval. They offer exponential convergence, good frequency resolution, and the approximation can be computed numerically via the FFT. However, when the function is smooth but non-periodic, the exponential convergence of a Fourier series over the interval is lost, and ringing artefacts known as the Gibbs phenomenon are introduced.

The Fourier extension technique (FE) \cite{Boyd2002, Boyd2005, Bruno2003, Bruno2007} aims to transfer the desirable properties
of Fourier series for periodic functions to the non-periodic case.  The 
principle is to approximate a non-periodic function that is defined on $[-1,1]$
by a Fourier series that is periodic on $[-T,T]$. While the approximation may
vary wildly in $[-T,-1[$ and $]1,T]$, under certain conditions it is guaranteed
to converge exponentially to the original function within the interval. An
illustration is shown in Figure \ref{fig:fexample}, where the extension is seen
to agree closely with the given function on $[-1,1]$. Outside this interval, the
extension is arbitrary, and in most cases defined by the solution method.

The main difficulty with this technique is the ill-conditioning of the
restricted  Fourier basis. Numerically, this leads to ill-conditioned linear
systems, which are difficult to solve efficiently.

\setlength{\figurewidth}{8cm}
\setlength{\figureheight}{4cm}
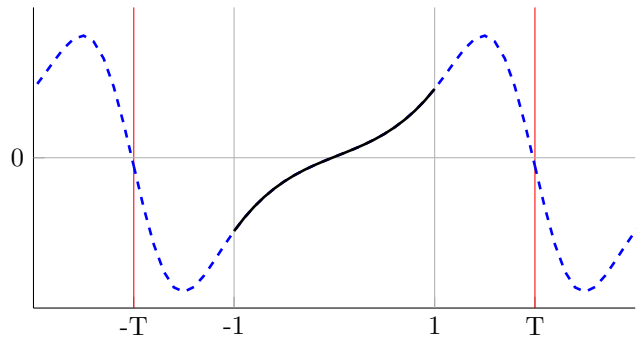
\begin{figure}[htbp]
\begin{center}
%
%
\begin{tikzpicture}

\begin{axis}[%
width=\figurewidth,
height=\figureheight,
scale only axis,
xmin=-3,
xmax=3,
xtick={-2,-1,1,2},
xticklabels={-T,-1,1,T},
ymin=-1.2,
ymax=1.2,
ytick={0},
axis x line*=bottom,
axis y line*=left
]
\addplot [
color=red,
solid,
forget plot
]
table[row sep=crcr]{
-2 -1.2\\
-2 1.2\\
};
\addplot [
color=red,
solid,
forget plot
]
table[row sep=crcr]{
2 -1.2\\
2 1.2\\
};
\addplot [
color=white!70!black,
solid,
forget plot
]
table[row sep=crcr]{
-1 -1.2\\
-1 1.2\\
};
\addplot [
color=white!70!black,
solid,
forget plot
]
table[row sep=crcr]{
1 -1.2\\
1 1.2\\
};
\addplot [
color=white!70!black,
solid,
forget plot
]
table[row sep=crcr]{
-3 0\\
3 0\\
};
\addplot [
color=blue,
dashed,
line width=1.0pt,
forget plot
]
table[row sep=crcr]{
-6 -0.0670435368132578\\
-5.9 -0.40194964697431\\
-5.8 -0.693593669396541\\
-5.7 -0.910058969741345\\
-5.6 -1.03527004685047\\
-5.5 -1.07057293565218\\
-5.4 -1.03154365051825\\
-5.3 -0.941656592429248\\
-5.2 -0.825292008123391\\
-5.1 -0.70235396818632\\
-5 -0.585735023107513\\
-4.9 -0.481560920197492\\
-4.8 -0.391172589679631\\
-4.7 -0.313498444863129\\
-4.6 -0.246809682987811\\
-4.5 -0.18951150004584\\
-4.4 -0.140201963408321\\
-4.3 -0.0974684039247736\\
-4.2 -0.0597783348777481\\
-4.1 -0.0255462043729277\\
-4 0.00675006696498824\\
-3.9 0.0385462327629888\\
-3.8 0.0712782098587375\\
-3.7 0.106468325289677\\
-3.6 0.145702070910644\\
-3.5 0.190511613724997\\
-3.4 0.242309572173317\\
-3.3 0.302498322036329\\
-3.2 0.372672797897078\\
-3.1 0.454560812542801\\
-3 0.54923504330363\\
-2.9 0.655371207672469\\
-2.8 0.76690482336103\\
-2.7 0.871093623144101\\
-2.6 0.948323307025697\\
-2.5 0.974681402618259\\
-2.4 0.927344299482684\\
-2.3 0.79152503116718\\
-2.2 0.566708142267839\\
-2.1 0.269708842911472\\
-2 -0.0670435368132572\\
};
\addplot [
color=blue,
dashed,
line width=1.0pt,
forget plot
]
table[row sep=crcr]{
-2 -0.0670435368132578\\
-1.9 -0.40194964697431\\
-1.8 -0.693593669396541\\
-1.7 -0.910058969741345\\
-1.6 -1.03527004685047\\
-1.5 -1.07057293565218\\
-1.4 -1.03154365051825\\
-1.3 -0.941656592429248\\
-1.2 -0.825292008123391\\
-1.1 -0.70235396818632\\
-1 -0.585735023107513\\
-0.9 -0.481560920197492\\
-0.8 -0.391172589679631\\
-0.7 -0.313498444863129\\
-0.6 -0.246809682987811\\
-0.5 -0.18951150004584\\
-0.4 -0.140201963408321\\
-0.3 -0.0974684039247736\\
-0.2 -0.0597783348777481\\
-0.1 -0.0255462043729277\\
0 0.00675006696498824\\
0.1 0.0385462327629888\\
0.2 0.0712782098587375\\
0.3 0.106468325289677\\
0.4 0.145702070910644\\
0.5 0.190511613724997\\
0.6 0.242309572173317\\
0.7 0.302498322036329\\
0.8 0.372672797897078\\
0.9 0.454560812542801\\
1 0.54923504330363\\
1.1 0.655371207672469\\
1.2 0.76690482336103\\
1.3 0.871093623144101\\
1.4 0.948323307025697\\
1.5 0.974681402618259\\
1.6 0.927344299482684\\
1.7 0.79152503116718\\
1.8 0.566708142267839\\
1.9 0.269708842911472\\
2 -0.0670435368132572\\
};
\addplot [
color=blue,
dashed,
line width=1.0pt,
forget plot
]
table[row sep=crcr]{
2 -0.0670435368132578\\
2.1 -0.40194964697431\\
2.2 -0.693593669396541\\
2.3 -0.910058969741345\\
2.4 -1.03527004685047\\
2.5 -1.07057293565218\\
2.6 -1.03154365051825\\
2.7 -0.941656592429248\\
2.8 -0.825292008123391\\
2.9 -0.70235396818632\\
3 -0.585735023107513\\
3.1 -0.481560920197492\\
3.2 -0.391172589679631\\
3.3 -0.313498444863129\\
3.4 -0.246809682987811\\
3.5 -0.18951150004584\\
3.6 -0.140201963408321\\
3.7 -0.0974684039247736\\
3.8 -0.0597783348777481\\
3.9 -0.0255462043729277\\
4 0.00675006696498824\\
4.1 0.0385462327629888\\
4.2 0.0712782098587375\\
4.3 0.106468325289677\\
4.4 0.145702070910644\\
4.5 0.190511613724997\\
4.6 0.242309572173317\\
4.7 0.302498322036329\\
4.8 0.372672797897078\\
4.9 0.454560812542801\\
5 0.54923504330363\\
5.1 0.655371207672469\\
5.2 0.76690482336103\\
5.3 0.871093623144101\\
5.4 0.948323307025697\\
5.5 0.974681402618259\\
5.6 0.927344299482684\\
5.7 0.79152503116718\\
5.8 0.566708142267839\\
5.9 0.269708842911472\\
6 -0.0670435368132572\\
};
\addplot [
color=black,
solid,
line width=1.0pt,
forget plot
]
table[row sep=crcr]{
-1 -0.58575\\
-0.9 -0.4815\\
-0.8 -0.39125\\
-0.7 -0.3135\\
-0.6 -0.24675\\
-0.5 -0.1895\\
-0.4 -0.14025\\
-0.3 -0.0975\\
-0.2 -0.05975\\
-0.1 -0.0255\\
0 0.00675\\
0.1 0.0385\\
0.2 0.07125\\
0.3 0.1065\\
0.4 0.14575\\
0.5 0.1905\\
0.6 0.24225\\
0.7 0.3025\\
0.8 0.37275\\
0.9 0.4545\\
1 0.54925\\
};
\end{axis}
\end{tikzpicture}%
\end{center}
\caption{A periodic extension to $[-T,T]$ of a smooth function on $[-1,1]$.}
\label{fig:fexample}
\end{figure}

These constructions have been known in embedded or fictitious domain methods
 for general bases. A study on the approximation properties of
extensions in the Fourier basis by Boyd \cite{Boyd2002} revealed that inside the
smaller interval, the
extension  can be exponentially converging to the function. Further, he proposed
the truncated singular value decomposition as a robust method
for computing extensions from equispaced data. The resulting scheme, named
FPIC-SU, can compute extensions of functions in the smaller interval, that are
exact almost up to machine precision. The convergence rate is only limited by
the smoothness of the function. Bruno et.\ al.\ used the same principle when
approximating surfaces by Fourier series on extended domains\cite{Bruno2003}. This provided a starting point for the very efficient FC-Gram method
\cite{Bruno2010, Lyon2010, albin2011spectral}.

Exponential error convergence of the FE problem was proven in
\cite{Huybrechs2010} when inverting the Grammian matrix of the continuous least squares problem. Later, Adcock et. al extended the convergence analysis to the discrete least squares problem on equispaced data. At least superalgebraic convergence was proven for
analytic functions, when using the truncated SVD.

From an implementation point of view, the cost of the full SVD required for the FPIC-SU is prohibitively large. Recently, an FE algorithm was introduced by Lyon \cite{Lyon2011} that computes extensions in ${\mathcal O}(N\log^2{N})$ time. However, this algorithm only produces extensions of double length, i.\ e.\ it is unique to the case $T=2$. Due to the reliance on symmetries only present when $T$ is a power of 2, the algorithm cannot easily be extended to arbitrary $T$. 

In this paper we present fast algorithms for the computation of Fourier extensions of arbitrary extension length. 
One argument for varying the parameter $T$ is found in the resolution power
of the extension. The number of degrees of freedom per wavelength to represent
an oscillatory function approaches the optimal value of 2 as $T$ approaches $1$
from above \cite{Adcock2011}. When $T=2$, that number has already
doubled. Other arguments may be performance related, as one may for example tune
the length of the FFTs that are used in the computations, or a restriction on the data may be found in the application itself.

Our algorithms stem from connecting the FE problem with classical results from
signal processing theory. We state how it is essentially equivalent to the
problem of bandlimited extrapolation. Central in this discussion are
the so-called Prolate Spheroidal Wave functions, originally introduced at Bell labs in the
1970s \cite{Slepian1978a, Landau1961, Pollack1961}. The study of these
functions and their special properties has been an active domain in signal processing since.

The connection with Prolate Spheroidal Wave theory leads to explicit
formulations for eigenvectors of the FE problem. Analysis of the FE problem
learns that the relevant ill-conditioning can be captured using just
${\mathcal O}(\log{N})$ of these vectors. Combined with a fast solver for the remaining
well-conditioned problem this leads to ${\mathcal O}(N\log^2{N})$ solvers.

We explore two approaches in detail. In the first approach, a set of ${\mathcal O}(\log N)$ \emph{discrete prolate spheroidal sequences} is explicitly computed. This is based on their known properties: the vectors are known to be eigenvectors of a tridiagonal matrix. The second approach is purely algebraic and is not explicitly based on prolate spheroidal wave theory. Hence, this approach is more generally applicable. Both approaches have comparable performance, with a slight edge for the first approach.

\subsection{Overview of the paper}

 We formally state the FE problem in \S\ref{sec:fourierext} and summarize relevant previous results. A concise overview of Prolate Spheroidal Wave functions is presented in \S\ref{sec:slepian}, including subsequent work on discrete variants. The
 connection between these discrete variants and the FE problem is used to
 obtain fast algorithms in \S\ref{sec:algorithms}. Finally, we present numerical experiments to illustrate the numerical performance of these algorithms in \S\ref{sec:results}.

\section{Fourier extensions}
\label{sec:fourierext}

\subsection{Problem formulation}

For the remainder of this paper we will focus on infinitely differentiable non-periodic functions $f$ on the interval $[-1,1]$. 
Other intervals are easily dealt with through affine transformations. 

The approximation is constructed on the extended interval $[-T,T]$, with $T$ the extension parameter. 
For ease of notation denote 
 \begin{equation*}
\phi_k(x)=\frac{1}{\sqrt{2T}}\e^{\imath\frac{\jn\pi}{T}x}.
\end{equation*}
Then \begin{equation*}
\frameb_N =\left\{\phi_k\right\}_{\jn=-\sn,\dots,\sn}
\end{equation*}
 is the set of $\bN=2\sn+1$ Fourier basis functions on the extended interval. 
The Fourier extension problem is now formalised as finding an approximation $F_\bN(f)$ such that
\begin{equation}
F_\bN(f)=\min_{g\in\frameb_N}||f-g||_{[-1,1]}.
\label{eqn:febase}
\end{equation}

We note that the infinite set $\frameb_\infty$ is a basis for $\mathcal{L}^2$ on the extended interval $[-T,T]$, but it constitutes a \emph{frame} on $[-1,1]$ \cite{Duffin1952}, i.e. it is redundant. This reflects the fact that a function can be extended in different ways. Once truncated to finite $N$, $\frameb_N$ is actually a basis on $[-1,1]$, albeit a very ill-conditioned one. We refer to \cite{Adcock2012} for a detailed discussion of this point of view.

In an implementation context, it is more natural to consider the search for the Fourier coefficients $a\in\mathbb{C}^N$, 
\begin{equation*}
a=\min_{c\in\mathbb{C}^N}||f-\sum_{\jn=-n}^nc_k\phi_k||_{[-1,1]}.
\end{equation*}
Depending on the norm to be minimised, the extension problem takes on a different formulation:

\paragraph{Discrete Fourier extensions}
Most practical applications provide information about $f$ as samples in a
predefined set of points. 
The norm in (\ref{eqn:febase}) is then conveniently replaced by a discrete least squares norm
\begin{equation}
\tilde{F}_\bN(f)=\min_{g\in\frameb_N}\sum_\jm\left(f(x_\jm)-g(x_\jm)\right)^2,\qquad x_\jm\in[-1,1].
\label{eqn:febase2}
\end{equation}
Previous work distinguishes between uniform sampling, and an optimal sampling
set called \emph{mapped symmetric Chebychev nodes} \cite{Adcock2011}. 
The focus of this paper is on the case where the sampling points are uniform,
\begin{equation*}
x_\jm=\frac{\jm}{\sm},\qquad \jm=-\sm,\dots,\sm,
\end{equation*}
for a total of $M=2m+1$ points. Typically, some oversampling $\gamma$ is used so
$M=\gamma N > N$. 
Including appropriate normalisation, this leads to the matrix formulation that is used throughout the rest of the paper. Let
\begin{equation}
\DM_{\jm,\jn}=\phi_\jn(x_\jm)=\frac{1}{\sqrt{2T\sm}}\e^{\imath\frac{\pi \jn \jm}{T\sm}},\qquad b_\jm=\frac{1}{\sqrt{\sm}}f(x_\jm).
\label{eqn:discretefe}
\end{equation}
Note that matrix-vector products involving $\DM$ can be performed very efficiently using FFTs of length $L=2T\sm$. Therefore $L$ is assumed to be integer for the remainder of this paper.

The Fourier coefficients are then found by collocation, through solving the rectangular system
\begin{equation}\label{eq:system_discrete}
\DM a\approx b
\end{equation}
in a least squares sense.
Note that due to the nature of the problem we are not interested in a
specific solution $a$, just one of the possibly many solutions that lead to a
small residual $||Aa-b||$. Solving this problem efficiently is the focal point of
this paper.

\paragraph{Continuous Fourier extensions}
When the norm is the regular $\mathcal{L}^2$-norm over $[-1,1]$, 
\begin{equation*}
\overline{F}_N(f)=\min_{g\in\frameb_N}||f-g||_{2,[-1,1]}
\end{equation*} is known as the \emph{continuous Fourier extension}. The resulting least-squares problem can be solved by formulating the Grammian matrix. Let 
\begin{equation}
\label{eqn:continuousfe}
\CM_{i,j}=\int_{-1}^1\phi_i(x)\overline{\phi_j(x)}dx=\frac{\sin{\frac{\pi(i-j)}{T}}}{\pi(i-j)},\qquad b_i=\int_{-1}^1f\overline{\phi_i(x)}dx.
\end{equation}
Then the Fourier coefficients follow from
\begin{equation}\label{eq:system_continuous}
\CM a=b.
\end{equation}

\subsection{Convergence and stability}

The usefulness of approximation schemes hinges on two properties: the speed of convergence to the given function, and the stability of the required computations. 
Considerable effort has been put into quantifying these properties both analytically and numerically \cite{Huybrechs2010}  \cite{Adcock2011} \cite{Adcock2012} \cite{Lyon2012a}. 
Without going into too much detail, we recap the most important results.

First of all, a distinction should be made between the exact discrete and continuous FE solutions $\tilde{F}_N(f)$ and $\overline{F}_N(f)$, and their computer-implemented counterparts. 
Computing the exact solutions is known to be unstable, as they can grow
unbounded outside the interval of interest. Numerical algorithms however will never compute these exact solutions. 
Due to regularisation, the numerical FEs $\tilde{G}_N(f)$ and
$\overline{G}_N(f)$ are more stable, while maintaining the desired
convergence behaviour.
In \cite{Adcock2012}, the truncated SVD was used as a model scheme for solving
the ill-conditioned systems (\ref{eqn:discretefe}) and
(\ref{eqn:continuousfe}). Given the Singular Value Decomposition of the FE matrix
$\DM=USV'$, the solution to $\DM a=b$ is
\begin{equation}
  \label{eq:1}
a=VS^\dagger U'b,\qquad
S^\dagger_{i,i}=\begin{cases}
  \frac{1}{S_{i,i}} & S_{i,i}>\cutoff \\
0 & \mbox{otherwise}.
\end{cases}
\end{equation}
The truncation parameter $\cutoff$ then acts as a regularisation parameter. It
is usually chosen close to the machine precision. 

\paragraph{Stability}
Following \cite{Adcock2012}, stability is defined in terms of the absolute condition number of the FE mapping 
\begin{equation*}
\kappa(F_N)=\sup\{||F_N(b)||:b\in\mathbb{C}^N,||b||=1\},
\end{equation*}
where, with slight abuse of notation, $F_N(b)$ is the solution to the FE problem with right hand side $b$, and
$||\cdot||$ is the regular $l^2$ norm over $[-1,1]$. This condition number can
be computed for the continuous and discrete FEs, both the exact and numerical
versions. It was shown to grow exponentially in $N$ for the exact solution
to the continuous and discrete FE problem.


When looking at the numerical FEs the situation changes considerably. 
The condition number of the numerical continuous FE mapping is $\kappa(\overline{G}_N) \lesssim 1/\sqrt{\cutoff}$. 
 The condition number of the numerical discrete FE $\kappa(\tilde{G}_N)$ is dependent on a constant $0<a(\gamma;T)\leq1$, independent of $N$, that satisfies $a(\gamma;T)\to0$ as $\gamma\to\infty$ for fixed $T$. 
 It is given by $\kappa(\tilde{G}_N)\lesssim \cutoff^{-a(\gamma;T)}, \quad\forall N\in N$. 
 This means that for a sufficiently large oversampling factor $\gamma$, the condition number of the numerical FE mapping can be made reasonably close to 1. 

Meanwhile, the condition number of the matrices $\DM$ and $\CM$ grows exponentially as $N\to\infty$, where $M\geq N$ for the discrete FE. 
This is surprising, given the good condition of the FE mapping. 
It can be understood by noting that extensions with small coefficient norm and small residual are guaranteed to exist. 
The numerical algorithms will steer clear of the unstable exact solution, and instead return one of these alternatives. 
For a full exposition on the stability of FE calculations, see \cite{Adcock2012}.

\paragraph{Convergence}
Concerning convergence, results in \cite{Adcock2012} are valuable only for functions that are analytic in a region $\mathcal{D}(\rho^*)$ of the complex plane and continuous on its border. 
This region is a Bernstein ellipse under a transformation that allows Fourier extensions to be understood as polynomial approximations. 
For such functions $f$, the exact continuous and discrete FEs converge geometrically, with a speed \begin{equation*}
||f-\tilde{F}_N(f)|| \leq c_f\rho^{-N}.
\end{equation*}
Here $\rho=\min\{\rho^*,E(T)\}$ and $c_f$ is proportional to $\max_{x\in\mathcal{D}(\rho)}|f(x)|$. 
$E(T)$ is known as the Fourier extension constant and is given by $\cot^2\left(\frac{\pi}{4T}\right)$. 
Note that for the exact discrete FE, there is an added requirement of scaling
$M$ as $O(N^2)$, to avoid the Runge phenomenon.

Under the same analyticity conditions, the error decay of the numerical
counterparts $\overline{G}_N$ and $\tilde{G}_N$ can be broken down into several subregions:
\begin{enumerate}
\item If $N<N_2$, where $N_2$ is a function-independent breakpoint, $||f-\tilde{G}_N(f)||$ converges or diverges exponentially fast at the same rate as the exact solution.
\item When $N\leq N_0$ (continuous) or $N_2 \leq N \leq N_1:=2N_0$ (discrete), where $N_0$ is another function-independent breakpoint depending, both $||f-\overline{G}_N(f)||$ and $||f-\tilde{G}_N(f)||$ decay like $\rho^{-N}$.
\item When $N=N_0$ or $N=N_1$, the errors are approximately
\begin{equation*}
||f-\overline{G}_{N_0}(f)||\approx c_f(\sqrt{\cutoff})^{d_f}, ||f-\tilde{G}_{N_1}(f)||\approx c_f\cutoff^{d_f-a(\gamma;T)},
\end{equation*}
where $c_f$ is as before, and $d_f=\frac{\log{\rho}}{\log{E(T)}}\in (0,1]$.
\item When $N>N_0$ or $N>N_1$, the errors decay at least super algebraically fast down to \emph{maximal achievable accuracies} of order $\sqrt{\cutoff}$ and  $\cutoff^{1-a(\gamma;T)}$ respectively.
\end{enumerate}

This behaviour of the error offers insight into the usability of Fourier extensions. 
A first observation is that the continuous FE is limited to a maximal achievable accuracy of $\sqrt{\cutoff}$. 
Coupled with the need to compute Fourier integrals to compose the right hand side $b$ in (\ref{eqn:continuousfe}), this makes the continuous FE unfit for practical use. 
However, the algorithms presented in \S\ref{sec:algorithms} for the discrete FE can be adapted to this context with little extra effort. 
This is documented in \S\ref{sec:continuousalg}. 

On the contrary, the numerical discrete FE guarantees convergence up to a certain power of $\cutoff$. 
By varying this cutoff, the oversampling and the extension length $T$ this
maximum achievable accuracy can be made very close to the machine precision.

\subsection{Influence of the extension length}
\label{sec:extensionlength}
The main contributions of this paper are algorithms that add flexibility in the
choice of extension length $T$. The increased resolution power was already cited
as an argument to reduce $T$, but it is important to be aware of the possible consequences.
Therefore, in this section we summarize the influence of this parameter on
convergence, resolution power, and conditioning of the FE problem.

First note that the Fourier extension constant $E(T)$ grows with $T$. 
For functions analytic in a sufficiently large region, the convergence rate
$\rho$ is limited by this constant. Increasing $T$ thus increases the
convergence rate, and vice versa.

The resolution power of a scheme, first studied by Gottlieb and Orszag
\cite{Gottlieb1983} is a measure of the amount of point samples needed to resolve an oscillatory function to a certain precision. 
Let 
\begin{equation*}
\mathcal{R(\omega,\delta)}=\min\{N\in\mathbb{N}:||e^{i\pi\omega}-F_N(e^{i\pi\omega})||_{\infty}<\delta\},\quad \omega>0,
\end{equation*}
for some small $\delta$. 
Then $F_N$ has a resolution constant $r$ if
\begin{equation*}
\mathcal{R}(\omega,\delta)\sim r\omega, \quad \omega\to\infty.
\end{equation*}
For regular Fourier series, this constant has the optimal value 2.

A theoretical argument shows that for the continuous FE this resolution constant increases with $T$ \cite{Adcock2011}. 
More specifically, 
\begin{equation*}
r(T)\leq 2T\sin\left(\frac{\pi}{2T}\right), \quad T\in(1,\infty).
\end{equation*}
Thus, for $T\approx 1$ the resolution constant $r(t)\approx 2T$ is close to optimal. 
When $T$ tends to infinity, $r(T)\sim\pi$.  
It is even possible to optimally balance convergence speed with resolution power when aiming for a predetermined accuracy $\epsilon_{tol}$. 
This is achieved by varying $T$ with $N$, specifically
\begin{equation}
T(N,\epsilon_{tol})=\frac{\pi}{4}\left(\arctan(\epsilon_{tol})^{\frac{1}{2N}}\right)^{-1}.
\label{eqn:tvarieswn}
\end{equation}

Although no equivalent analysis exists for the discrete FE, there have been several attempts to determine FE parameters that are in some sense optimal.
In \cite{Bruno2007} Bruno suggests the values $T=2$ and $\gamma=2$ as a general rule of thumb, but at the same time notes that the optimal parameters are heavily function dependent. 
Note that increasing both the extension length $T$ and the oversampling $\gamma$ will likely increase the resolution constant $r$. 
Especially since it was shown in \cite{Adcock2011} that the limit $r(T)\sim\pi$ as $T\to\infty$ no longer holds for a discretised FE, instead the resolution constant grows as $r(T)\sim 2T$. 
Even though this was only observed for data points distributed as a variant of Chebychev points, it is indicative that the resolution constant for $T=2,\gamma=2$ will be considerably above the optimal value.

The precise interplay between $T$ and $\gamma$ on the one hand, and resolution power and conditioning on the other hand was studied in detail by Adcock and Rua \cite{Adcock2013}. 
They found that the condition number $\kappa(\tilde{G}_N)$ of the equispaced
discrete FE depends only on the product $T\gamma$. 
Increasing either will lower the condition number.
Thus as long as $\gamma$ is increased or decreased accordingly when varying $T$, the conditioning of the FE mapping remains constant. 
This is cited as an argument to limit $T$ to 2, to profit from the at the time only available fast FE algorithm. 

Furthermore, the resolution constant is also dependent on the product $T\gamma$, growing as $r(T)\sim T\gamma$.
This illustrates the tradeoff between resolution power and conditioning. 
Numerical experiments in \cite{Adcock2013} showed that by allowing the condition number to grow from $\kappa\approx10$ to $\kappa\approx100$, the resolution constant was halved, while further increasing $\kappa$ had very little additional value.
However, it should be noted that these experiments were only carried out for $T=2$.
Lifting the restriction on $T$ may thus offer more flexibility in finding a balance between resolution power and conditioning.

An interesting open problem raised in \cite{Adcock2013} is the possibility to vary $T$ with $M$, to achieve optimal resolution power in a manner similar to (\ref{eqn:tvarieswn}). 
Due to the lack of a fast algorithm, any gains from varying $T$ were considered of limited practical usability compared to the fast algorithm for $T=2$. 
The fast algorithms presented in this paper warrant a closer look at the possible benefits from this method.


\section{Prolate spheroidal wave functions and discrete variants}
\label{sec:slepian}


A long-standing problem in signal processing theory is that of bandlimited
extrapolation. The problem is, assuming some portion of a bandlimited signal is
known, to accurately predict the missing data. In a first subsection
we explain how the FE problem is a specific variant of this problem. The subsequent
sections then explore the theory of Prolate Spheroidal Wave functions, that
plays a major role in bandlimited extrapolation, for further use in the FE algorithms.

\subsection{Discrete Fourier extensions and bandlimited extrapolation}
The discrete Fourier extension is closely related to discrete band limited
extrapolation, i.e., to the problem of reconstructing a discrete bandlimited
signal from a number of data samples. Simply put, we are looking for a vector $y$ with discrete Fourier
transform $Y$ such that
\begin{align*}
y[k]&\approx f[k] \qquad k\in S_{\bM,t} \\
Y[l]&=0 \qquad l\not\in S_{\bN,\omega},
\end{align*}
where $f$ is the sample data, and $S_{\bM,t}$ and $S_{\bN,\omega}$ are sampling
sets in the discrete time and frequency domains, of sizes $M$ and $N$
respectively. Depending on the problem parameters the solution might not be
unique. In this case an additional minimal solution norm constraint can be added \cite{jain1981}.

A popular method is the Papoulis-Gerschberg algorithm \cite{Gerchberg1974} \cite{Papoulis1975}. It uses a two-step iteration process to alternate matching the given data and complying with the frequency constraints. Variants that use the conjugate gradients and related methods to speed up the iteration process tend to perform reasonably well numerically \cite{Strohmer1995}. They operate at a cost of $O(N\log{N})$ operations per iteration, where the number of iterations scales with the bandwidth of the signal.

Besides these iterative methods, considerable attention was given to direct methods for solving this discrete problem. One such method is the one proposed by Jain and Ranganath \cite{jain1981}, where both the time and frequency sampling sets are contiguous, i.\ e.\ $S_t=-\sm,\dots,\sm$ and $S_\omega=-\sn,\dots,\sn$. They commence by writing the data as a function of the unknown coefficients:
\begin{align}
 f&=Jy=D_\bM B_\bN y \label{eqn:jaineqn}\\
D_{\bM,i,j}&=\delta_{i,j}, \quad |i|,|j|\leq \sm \label{eqn:ddef}\\
B_{\bN,p,l}&=\frac{1}{L}\sum_{\jn=-\sn}^\sn \exp{\imath\frac{(p-l)2\pi \jn}{L}}=\frac{1}{L}\frac{\sin\left(\frac{(p-l)N\pi}{L}\right)}{\sin\left(\frac{(p-l)\pi}{L}\right)}.\label{eqn:bdef}
\end{align}
Here, $B_\bN$ is a $L\times L$ circulant matrix that represents a discrete low-pass filter, and $D_\bM$ is a $\bM\times L$ selection operator. $\delta_{i,j}$ is the kronecker delta. $D_M'$ is an extension operator that pads a sequence of length $M$ with zeros to length $L$.

The direct methods of Jain and Ranganath then consist of solving $Jy=f$ in a least squares sense by formulating the normal equations
\begin{equation*}
J'Jy=J'f.
\end{equation*}

Since $J'J$ is symmetric positive definite and Toeplitz, the Levinson-Trench
algorithm can be applied to compute the inverse of $J'J$ in $O(L^2)$
operations. They also suggested another approach,  using the singular value decomposition  of $J$ to solve the least-squares problem. The resulting singular vectors were named periodic discrete prolate spheroidal sequences (P-DPSSs), after the prolate spheroidal wave functions (PSWFs) arising in continuous bandlimited extrapolation. 

A closer look at the matrix $\DM$ from the discrete Fourier extension (\ref{eqn:discretefe}) makes the relation with (\ref{eqn:jaineqn}) apparent. 
Adopting the notation for the DFT length $L=2T\sm$, 
\begin{align}
\notag(\DM\DM')_{pq}
&=\frac{1}{L}\sum_{\jn=-\sn}^\sn\exp{\imath\frac{(p-q)2\pi \jn}{L}}, \qquad p,q=-\sm,\dots,\sm\\
&=(D_\bM B_\bN D_\bM')_{pq}=(JJ')_{pq},
\label{eqn:aaisjj}
\end{align}
where the last line follows from the idempotency of $B_\bN$. Consequently, $A$
and $J$ share the same left singular vectors, and the same singular
values. Essentially, discrete Fourier extension is a reformulation of the
bandlimited extrapolation problem with the $N$ frequency coefficients as
unknowns, instead of the extrapolated signal. The focus has also shifted from
determining an accurate extension to the approximation of the given data samples.

 The next sections concern the PSWFs and P-DPSSs, and how they are natural solutions for the bandlimited extrapolation problem. The groundwork for this theory was  detailed in a series of papers by Slepian, Landau and Pollak from 1961 onwards \cite{Slepian1978a,Landau1961,Pollack1961,Pollack1961a,Slepian1978}.
An overview is given in \cite{Slepian1983}.
\subsection{Prolate Spheroidal Wave Functions}
\label{sec:pswf}
%

Denote by $f(x)$ and $\mathcal{F}(\xi)$ a function in $\mathcal{L}^2$ and its Fourier transform, so that
 \begin{equation*}
F(\xi)=\int_{-\infty}^\infty f(x)e^{-2\pi i x \xi}ds,\qquad f(x)=\int_{-\infty}^\infty F(\xi)e^{2\pi i x \xi}d\xi.
\end{equation*}
The time- and bandlimiting operators $\mathcal{D}$ and $\mathcal{B}$ are then defined as 
\begin{equation}
\label{eqn:timebandlimit}
\mathcal{D}f(x)=\hat{f}(x)=\begin{cases}f(x) & |x|\leq T \\ 0 & |x|>T \end{cases}\qquad \mathcal{B}f(x)=\int_{-\Omega}^{\Omega}F(\xi)e^{i 2\pi \xi x }d\xi,
\end{equation}
which project onto $\mathcal{L}^2_{[-T,T]}$ and $PW_{\Omega}$, the Paley-Wiener space of bandlimited functions, respectively. 
Note that the bandlimiting operator can also be written as
\begin{equation*}
\mathcal{B}f(x)=\int_{-\infty}^\infty f(s)\frac{\sin(2\pi\Omega(x-s))}{\pi(x-s)}ds.
\end{equation*}

The Heisenberg-Gabor limit states that no function can be simultaneously concentrated in both time and frequency, and so $\forall f : ||\mathcal{B}\mathcal{D}f||<||f||$. However, one can look for nearly-invariant functions under this operator, functions for which $||\mathcal{B}\mathcal{D}f||/||f||$ is as close to 1 as possible. 

These are the eigenfunctions of the operator $\mathcal{B}\mathcal{D}$, i.\ e.\ the solutions of the integral equation
\begin{equation}
\label{eqn:pswfintequation}
\lambda\psi(x)=\int_{-T}^T \psi(s)\frac{\sin(2\pi\Omega(x-s))}{\pi(x-s)}ds.
\end{equation}

Slepian and collaborators showed that this equation is solvable only for select values of $\lambda$, a countably infinite set $1>\lambda_0>\lambda_1>\dots>0$. The corresponding eigenfunctions $\psi_i$ were named Prolate Spheroidal Wave functions. The naming stems from the curious observation that these functions are solutions to the spheroidal wave equation
\begin{equation}
\label{eqn:diffoperator}
\left(1-\frac{x^2}{T^2}\right)\frac{d^2\psi_i}{dx^2}-2x\frac{d\psi_i}{dx}-\left(2\pi\Omega T\right)^2x^2\psi_i=\theta_i\psi_i .
\end{equation}
This is a Sturm-Liouville equation with a set of unique eigenvalues $\dots >\theta_{i-1}>\theta_i>\theta_{i+1}>\dots$ corresponding to the functions $\psi_i$\cite{Slepian1978a}.


Since $\mathcal{B}$ and $\mathcal{D}$ are idempotent operators, it is convenient to consider the $\psi_i$ eigenfunctions of the Hermitian operator $\mathcal{B}\mathcal{D}\mathcal{B}$. 
Timelimiting both sides of (\ref{eqn:pswfintequation}), the timelimited functions $\hat{\psi}_i=\mathcal{D}\psi_i$ are the eigenfunctions of the Hermitian operator $\mathcal{D}\mathcal{B}\mathcal{D}$, with corresponding eigenvalues $\lambda_i$. The term Prolate Spheroidal Wave function is used for both the $\psi_i$ and the $\hat{\psi_i}$.

As eigenfunctions of a Hermitian operator, the $\psi_i$ and $\hat{\psi}_i$ are orthogonal
\begin{equation*}
\int_{-\infty}^{\infty}\psi_i(x)\psi_j(x)dx=\delta_{ij}\qquad\int_{-T}^{T}\hat{\psi}_i(x)\hat{\psi}_j(x)dx=\lambda_i\delta_{ij},
\end{equation*}
and they are complete in $PW_\Omega$ and $\mathcal{L}^2_{[-T,T]}$, respectively. The Prolate Spheroidal Wave functions thus form an orthogonal base for $PW_{\Omega}$, and the eigenvalue $\lambda_i$ represents the fraction of energy of $\psi_i$ contained in $[-T,T]$.

This leads to a straightforward approach to continuous bandlimited extrapolation. Let $f$ be a function segment in $\mathcal{L}^2_{[-T,T]}$. Then
\begin{equation}
g=\sum_i\frac{1}{\lambda_i}\langle \hat{\psi}_i,f\rangle_{[-T,T]} \psi_i
\label{eqn:bandlapprox}
\end{equation}
is a bandlimited function that agrees with $f$ in the interval due to the completeness of the $\hat{\psi_i}$ in $\mathcal{L}^2_{[-T,T]}$. Furthermore, when truncating the sum, the first terms have the largest eigenvalues and capture the most energy inside the interval.

Using both (\ref{eqn:pswfintequation}) and (\ref{eqn:diffoperator}), the PSWFs were further shown to have the following properties \cite{Slepian1978a}:
\begin{enumerate}[i]
%
%
%
\item The $\psi_i$ are eigenfunctions of the finite Fourier transform,
\begin{equation*}
\int_{-T}^T e^{\imath 2 \pi t \xi}\psi_n(t)dt=\imath^n\left(\frac{\lambda
    T}{\Omega} \right)^{1/2}\psi_n\left(\frac{\xi T}{\Omega}\right).
\end{equation*}

\item \label{prop:eigenvalue}The eigenvalues $\lambda_i$ cluster near 1 for low values of $i$, and decay exponentially after a set breakpoint
\begin{equation*}
\lambda_i \approx 1, i \ll 4\Omega T \quad \mbox{and} \quad \lambda_i\approx 0, i\gg 4\Omega T.
\end{equation*}
 The width of the region where $\lambda_i \in (\epsilon,1-\epsilon)$ grows as $\log(\Omega T)$.
%
\end{enumerate}
The first property illustrates the symmetry of time and frequency domain in this problem. The second property shows that a truncated approximation (\ref{eqn:bandlapprox}) will require $\approx4\Omega T$ terms. The increasing irregularity of the $\hat{\phi}_i$ coupled with the exponential decay of the $\lambda_i$ ensures that $g$ will converge rapidly after this breakpoint for a sufficiently regular $f$.
%
%
\subsection{Periodic Discrete Prolate Spheroidal Sequences}
\label{sec:pdpss}
There are several possible discretisations for PSWFs. The most well-known is the one proposed by Slepian, where the Fourier transform is replaced with the discrete-time Fourier transform
\begin{equation*}
G(f)=\sum_{n=-\infty}^\infty g[n]e^{-i2\pi f n},\qquad g[n]=\frac{1}{2\pi}\int_{-1/2}^{1/2}G(f)e^{i2\pi f n}df.
\end{equation*}
The resulting generalisations are frequency domain functions on $[-1/2,1/2]$ commonly referred to as discrete prolate spheroidal wave functions (DPSWFs), and infinite sequences in the time domain called discrete prolate spheroidal sequences (DPSSs).  The properties from section \ref{sec:pswf} carry over, apart from some loss of symmetry between time and frequency domains \cite{Slepian1978}.

Another approach was proposed independently by Jain and Ranganath \cite{jain1981} and Gr\"unbaum \cite{Grunbaum1981a}, and elaborated on in \cite{Xu1984}. They replace the Fourier transform with the DFT for sequences of length $L$:
\begin{equation*}
H_k=\sum_{n=0}^{L-1}h[n] e^{-i 2\pi k n /L}\qquad h[n]=\frac{1}{L}\sum_{k=0}^{L-1}H_k e^{i2\pi kn/L}.
\end{equation*}
The discrete analogues of the time- and bandlimiting operators are given by the matrices (\ref{eqn:ddef}) and (\ref{eqn:bdef}) from the discrete band limited extrapolation problem. Similar to the continuous PSWFs, the periodic discrete prolate spheroidal sequences (P-DPSS) $\phi_i$ are defined as the eigenvectors of the hermitian matrix operator $B_\bN D_\bM'D_\bM B_\bN$, and their limited versions $\hat{\phi_i}$ as the eigenvectors of $D_\bM B_\bN D_\bM$
\begin{equation}
B_\bN D_\bM'D_\bM B_\bN\phi_i=\dlambda_i\phi_i\qquad D_\bM B_\bN D_\bM'\hat{\phi_i}=\dlambda_i\hat{\phi_i}.
\label{eqn:pdpssdef}
\end{equation}
Since $D_\bM B_\bN D_\bM'$ is not of full rank when $\bM>\bN$, an additional
requirement that $\dlambda_i\neq0$ is imposed, following \cite{Xu1984}. Consequently, there are only $\{\min(\bN,\bM)\}$ P-DPSSs.

The $\phi_i$ properties are similar to those of PSWFs:
\begin{enumerate}[i]
\item If $\bM\geq\bN$, the $\phi_i$ are complete in the space of bandlimited sequences spanned by $B_\bN$.
\item If $\bM\leq \bN$, the $\hat{\phi}_i$ are complete in $\mathbb{R}^{\bM}$.
\item The $\phi_i$ are doubly orthogonal
\begin{equation*}
\phi_i\cdot\phi_j=\delta_{ij},\quad D\phi_i\cdot D\phi_j=\dlambda_i\delta_{ij}
\end{equation*}
\item The P-DPSSs are eigenvectors of the finite DFT, but with the roles of $\bM$ and $\bN$ interchanged
\begin{equation*}
D_\bN F\phi_{\bM,\bN,i}=\hat{\phi}_{\bN,\bM,i}.
\end{equation*}
Here, $F$ is the DFT matrix of size $L$.
\item Among sequences of length $L$, with frequency support in $[0,\sm]\cup[L-\sm,L]$, $\phi_0$ is the most concentrated in $[0,\bM-1]$. Among sequences of equally limited frequency support orthogonal to $\phi_0$, $\phi_1$ is the most concentrated in $[0,\bM-1]$, and so on.
\item Like the eigenvalues of the PSWFs, the eigenvalues $\dlambda_i$ are
  distinct and cluster exponentially near 1 and 0, in that
\begin{equation*}
\dlambda_i \approx 1, i \ll \frac{NM}{L} \quad \mbox{and} \quad \dlambda_i\approx 0, i\gg \frac{NM}{L}.\label{eqn:pdpsseigs}
\end{equation*}
 The width of the plunge region where $\dlambda_i \in (\epsilon,1-\epsilon)$ grows as $\log(NM/L)$ \cite{Wilson1987}.
\item The P-DPSSs satisfy a second order difference equation \cite{Xu1984}
\begin{equation}
\label{eqn:2ndorderdiff}b_{k}\hat{\phi_i}[k-1]+c_k\hat{\phi_i}[k]+b_{k+1}\hat{\phi_i}[k+1]=\theta_i\hat{\phi_i}[k],\qquad k=0,\dots,\bM-1,
\end{equation}
with coefficients
\begin{align}
\notag b_k&=\sin\left(\frac{\pi k}{L}\right)\sin\left(\frac{\pi (\bM-k)}{L}\right)\\
\notag c_k&=-\cos\left(\frac{\pi (2k+1-\bM)}{L}\right)\cos\left(\frac{\pi \bN}{L}\right).
\end{align}
\end{enumerate}

Equation  (\ref{eqn:pdpssdef}) clarifies their importance in both the discrete band limited extrapolation problem, and discrete Fourier extensions. From (\ref{eqn:aaisjj}) it is obvious that the eigenvectors of $\DM\DM'$ and $JJ'$ are the limited P-DPSSs $\hat{\phi_i}$. Regarding the singular value decompositions of $\DM$ and J, this determines the left singular vectors as $\hat{\phi_i}$ and the associated singular values as $\sqrt{\dlambda_i}$. For $J$, following
\begin{equation*}
	J'J=B_\bN' D_\bM' D_\bM B_\bN=B_\bN D_\bM B_\bN,
\end{equation*}
the right singular vectors are seen to be full P-DPSSs $\phi_i$. Using these relations, Jains SVD method results in an extrapolated sequence
\begin{equation*}
g[n]=\sum_{i=0}^{N-1} \frac{1}{\sqrt{\dlambda_i}}\left(\hat{\phi}_i\cdot f\right)\phi_i,
\end{equation*}
similar to the continuous extrapolation (\ref{eqn:bandlapprox}).

The discrete Fourier extension matrix $\DM$ differs from $J$ in the right singular vectors. Similar to (\ref{eqn:aaisjj})
\begin{align*}
\DM'\DM&=\left\{\sum_{\jm=-\sm}^\sm\exp{\imath\frac{(p-q)2\pi \jn}{L}}\right\}, \qquad p,q=-\sn,\dots,\sn\\
&=D_\bN B_\bM D_\bN.
\end{align*}
The right singular vectors are then again limited P-DPSSs $\hat{\phi_i}$, where the parameters $\bN$ and $\bM$ have been interchanged. 
Using $\hat{\Phi}$ to represent the set of P-DPSSs and $\dLambda$ for the diagonal matrix of eigenvalues, the SVD of $\DM$ is given by
\begin{equation*}
\DM=\hat{\Phi}_{\bM,\bN}\sqrt{\dLambda}\hat{\Phi}'_{\bN,\bM}.
\end{equation*}

Calculating the coefficients of the discrete FE by truncating the SVD now corresponds to
\begin{equation*}
a=\sum_{i=0}^{i_c} \frac{1}{\sqrt{\dlambda_i}}\left(\hat{\phi}_{\bN,\bM,i}\cdot f\right)\hat{\phi}_{\bM,\bN,i},
\end{equation*}
where $i_c$ is determined by the cutoff parameter,
\begin{equation*}
  \sqrt{\dlambda_{i_c}}\geq \cutoff > \sqrt{\dlambda_{i_c+1}}.
\end{equation*}
This formulation of the FE together with the P-DPSS properties listed above leads to the fast algorithms in the next section.



\section{Algorithms}
\label{sec:algorithms}
In this section we present two approaches to computing the truncated SVD
solution to the discrete Fourier extension problem $\DM x=b$. Both approaches
are based on a specific division in easier subproblems, which is
explained below. They differ only in how they solve one of these
subproblems. Detailed algorithms follow in sections~\ref{sec:tsvd} and~\ref{sec:pa}.

From the previous section, the SVD of the FE matrix $\DM$ is 
\begin{equation*}
\DM=U\Sigma V'=\hat{\Phi}_{\bM,\bN}\sqrt{\dLambda}\hat{\Phi}'_{\bN,\bM}.
\end{equation*}

The cost of this full SVD is prohibitively large, growing as ${\mathcal O}(N^3)$. To reach
the performance goal of ${\mathcal O}(N\log{N})$ operations, two subproblems are identified
and solved in succession.
The key to this subdivision is in the distribution of the singular
values $\Sigma$,  as shown in figure~\ref{fig:intervals}. Three distinct regions are
visible. Following property (\ref{eqn:pdpsseigs}) of the P-DPSS, they are for
some small cutoff $\cutoff$:
\begin{itemize}
\item A region $I_\alpha:=\{\sigma : 1>\sigma>1-\cutoff\}$ where all singular
  values are 1 up to a tolerance $\cutoff$. This region contains approximately ${\mathcal O}(NM/L)$ singular values.
\item A region $I_\beta:=\{\sigma : 1-\cutoff \geq \sigma > \cutoff\}$, also
  referred to as the ``plunge region'' in a more general context regarding
  truncated frames.
The number of singular values in this region grows as ${\mathcal O}(\log{(NM/L)})$.
\item A region $I_\gamma:=\{\sigma : \cutoff \geq \sigma > 0\}$ where the
  singular values further decay exponentially. 
\end{itemize}
\setlength{\figurewidth}{8cm}
\setlength{\figureheight}{4cm}
\begin{figure}[hbtp]
\centering
 \begin{tikzpicture}
      \node (A) at (0,0){
%
%
\begin{tikzpicture}

\begin{axis}[%
width=\figurewidth,
height=\figureheight,
scale only axis,
xmin=1,
xmax=401,
xtick={200.5},
xticklabels={N/T},
xlabel={i},
ymode=log,
ymin=1e-18,
ymax=10,
ytick={1e-16, 1e-08,     1},
yminorticks=true,
ylabel={$\sigma{}_\text{i}$}
]
\addplot [
color=blue,
solid,
forget plot
]
table[row sep=crcr]{
1 1\\
2 1\\
3 1\\
4 1\\
5 1\\
6 1\\
7 1\\
8 1\\
9 1\\
10 1\\
11 1\\
12 1\\
13 1\\
14 1\\
15 1\\
16 1\\
17 1\\
18 1\\
19 1\\
20 1\\
21 1\\
22 1\\
23 1\\
24 1\\
25 1\\
26 1\\
27 1\\
28 1\\
29 1\\
30 1\\
31 1\\
32 1\\
33 1\\
34 1\\
35 1\\
36 1\\
37 1\\
38 1\\
39 1\\
40 1\\
41 1\\
42 1\\
43 1\\
44 1\\
45 1\\
46 1\\
47 1\\
48 1\\
49 1\\
50 1\\
51 1\\
52 1\\
53 1\\
54 1\\
55 1\\
56 1\\
57 1\\
58 1\\
59 1\\
60 1\\
61 1\\
62 1\\
63 1\\
64 1\\
65 1\\
66 1\\
67 1\\
68 1\\
69 1\\
70 1\\
71 1\\
72 1\\
73 1\\
74 1\\
75 1\\
76 1\\
77 1\\
78 1\\
79 1\\
80 1\\
81 1\\
82 1\\
83 1\\
84 1\\
85 1\\
86 1\\
87 1\\
88 1\\
89 1\\
90 1\\
91 1\\
92 1\\
93 1\\
94 1\\
95 1\\
96 1\\
97 1\\
98 1\\
99 1\\
100 1\\
101 1\\
102 1\\
103 1\\
104 1\\
105 1\\
106 1\\
107 1\\
108 1\\
109 1\\
110 1\\
111 1\\
112 1\\
113 1\\
114 1\\
115 1\\
116 1\\
117 1\\
118 1\\
119 1\\
120 1\\
121 1\\
122 1\\
123 1\\
124 1\\
125 1\\
126 1\\
127 1\\
128 1\\
129 1\\
130 1\\
131 1\\
132 1\\
133 1\\
134 1\\
135 1\\
136 1\\
137 1\\
138 1\\
139 1\\
140 1\\
141 1\\
142 1\\
143 1\\
144 1\\
145 1\\
146 1\\
147 1\\
148 1\\
149 1\\
150 1\\
151 1\\
152 0.999999999999999\\
153 0.999999999999999\\
154 0.999999999999999\\
155 0.999999999999999\\
156 0.999999999999999\\
157 0.999999999999999\\
158 0.999999999999999\\
159 0.999999999999999\\
160 0.999999999999999\\
161 0.999999999999999\\
162 0.999999999999999\\
163 0.999999999999999\\
164 0.999999999999999\\
165 0.999999999999999\\
166 0.999999999999999\\
167 0.999999999999999\\
168 0.999999999999999\\
169 0.999999999999999\\
170 0.999999999999999\\
171 0.999999999999999\\
172 0.999999999999999\\
173 0.999999999999999\\
174 0.999999999999999\\
175 0.999999999999999\\
176 0.999999999999999\\
177 0.999999999999999\\
178 0.999999999999998\\
179 0.999999999999998\\
180 0.999999999999996\\
181 0.999999999999996\\
182 0.999999999999994\\
183 0.999999999999919\\
184 0.999999999999423\\
185 0.999999999996074\\
186 0.999999999973983\\
187 0.999999999832426\\
188 0.999999998952047\\
189 0.999999993644962\\
190 0.999999962680517\\
191 0.999999788119967\\
192 0.999998839238557\\
193 0.999993878273071\\
194 0.999969011704913\\
195 0.999850030505143\\
196 0.999310095837935\\
197 0.997012128074812\\
198 0.988042423955423\\
199 0.95739991677528\\
200 0.87329565480279\\
201 0.707106781186548\\
202 0.487190619062562\\
203 0.288765301514372\\
204 0.154182257294063\\
205 0.0772451711871729\\
206 0.037139363973817\\
207 0.017318097437692\\
208 0.00787246021866295\\
209 0.00349905935643662\\
210 0.00152365400785586\\
211 0.00065096852348711\\
212 0.000273201327214996\\
213 0.00011273896632449\\
214 4.57810880511899e-05\\
215 1.83071315633366e-05\\
216 7.21336233455454e-06\\
217 2.80199418265696e-06\\
218 1.07352223327795e-06\\
219 4.05834968634566e-07\\
220 1.51441266313666e-07\\
221 5.58009126251093e-08\\
222 2.03082761446577e-08\\
223 7.30230801877191e-09\\
224 2.5948541884616e-09\\
225 9.11449414820242e-10\\
226 3.16528675706672e-10\\
227 1.08702578736242e-10\\
228 3.69232002268287e-11\\
229 1.24066728685844e-11\\
230 4.12463691980625e-12\\
231 1.35733582145616e-12\\
232 4.42003964741545e-13\\
233 1.42742163701434e-13\\
234 4.54773519809507e-14\\
235 1.50687724205528e-14\\
236 5.89983174337997e-15\\
237 5.69881624950702e-15\\
238 5.6003323309543e-15\\
239 5.50264185774455e-15\\
240 5.4252600360708e-15\\
241 5.1830735312815e-15\\
242 5.1368879205954e-15\\
243 4.97629386919343e-15\\
244 4.92401427098856e-15\\
245 4.84999617010169e-15\\
246 4.79805998390163e-15\\
247 4.70579925112866e-15\\
248 4.66657367563156e-15\\
249 4.57186165188267e-15\\
250 4.56419158349445e-15\\
251 4.51714969728416e-15\\
252 4.44897945565394e-15\\
253 4.41297696712668e-15\\
254 4.36775913047172e-15\\
255 4.28796601023895e-15\\
256 4.26883241550268e-15\\
257 4.20198640858813e-15\\
258 4.16066586411662e-15\\
259 4.14502491942751e-15\\
260 4.04752773071054e-15\\
261 4.02002070958001e-15\\
262 3.94604127047529e-15\\
263 3.87563009433251e-15\\
264 3.83578980667469e-15\\
265 3.80666507640145e-15\\
266 3.7969027820344e-15\\
267 3.7424793338949e-15\\
268 3.70087844460706e-15\\
269 3.61869239789062e-15\\
270 3.6006269263034e-15\\
271 3.55097631364902e-15\\
272 3.52239227413196e-15\\
273 3.4869403117794e-15\\
274 3.46278563257072e-15\\
275 3.42369037991773e-15\\
276 3.38107814020415e-15\\
277 3.34956028405386e-15\\
278 3.30421414059321e-15\\
279 3.28383667685452e-15\\
280 3.21840225752796e-15\\
281 3.15067051499966e-15\\
282 3.13252297187441e-15\\
283 3.08737594026692e-15\\
284 3.04780961496623e-15\\
285 3.00818291527548e-15\\
286 2.95638169015464e-15\\
287 2.93489875108495e-15\\
288 2.90476027804772e-15\\
289 2.85911180214696e-15\\
290 2.80235856811339e-15\\
291 2.78538880670818e-15\\
292 2.71932185266321e-15\\
293 2.7028149862441e-15\\
294 2.66414753452827e-15\\
295 2.62919250445293e-15\\
296 2.59251049657372e-15\\
297 2.56699608402979e-15\\
298 2.54487374527012e-15\\
299 2.50489330502225e-15\\
300 2.46312331986186e-15\\
301 2.44142121999655e-15\\
302 2.40276710329481e-15\\
303 2.37509993707391e-15\\
304 2.32760359772636e-15\\
305 2.3057109899816e-15\\
306 2.27921314941029e-15\\
307 2.2432645452491e-15\\
308 2.22943989892751e-15\\
309 2.18497200067638e-15\\
310 2.1803415268201e-15\\
311 2.14670573379456e-15\\
312 2.1116787442456e-15\\
313 2.08489616763686e-15\\
314 2.04828803079519e-15\\
315 2.02093026403333e-15\\
316 2.00257062852027e-15\\
317 1.97577068711085e-15\\
318 1.94066631808673e-15\\
319 1.91035337961431e-15\\
320 1.90261049242461e-15\\
321 1.84960496418279e-15\\
322 1.83352361982122e-15\\
323 1.81591373010123e-15\\
324 1.78676907654867e-15\\
325 1.76868212275333e-15\\
326 1.74769066088856e-15\\
327 1.72188782568121e-15\\
328 1.70335637345011e-15\\
329 1.67396320575549e-15\\
330 1.6646921689456e-15\\
331 1.62479841653213e-15\\
332 1.60095198002813e-15\\
333 1.58228863842419e-15\\
334 1.57204319067562e-15\\
335 1.51027929859025e-15\\
336 1.48555755613872e-15\\
337 1.47923834025061e-15\\
338 1.42468360971492e-15\\
339 1.40876547145308e-15\\
340 1.38621171940967e-15\\
341 1.35182593207901e-15\\
342 1.3338828805722e-15\\
343 1.30401525627695e-15\\
344 1.28700599005213e-15\\
345 1.27004371995624e-15\\
346 1.26650795571463e-15\\
347 1.23584287166474e-15\\
348 1.20799989452959e-15\\
349 1.191364311687e-15\\
350 1.17656238002719e-15\\
351 1.1372878772812e-15\\
352 1.1050297518822e-15\\
353 1.09231046378957e-15\\
354 1.07630217583652e-15\\
355 1.04336872251893e-15\\
356 1.02879715661698e-15\\
357 1.00929804473092e-15\\
358 9.7495379792071e-16\\
359 9.37478937407565e-16\\
360 9.23634426080908e-16\\
361 9.05752765255355e-16\\
362 8.80052494521279e-16\\
363 8.64417961459509e-16\\
364 8.44562774165038e-16\\
365 8.30659505899575e-16\\
366 8.01401604611504e-16\\
367 7.84253680198864e-16\\
368 7.59416611834546e-16\\
369 7.47831645377325e-16\\
370 7.22365379396091e-16\\
371 7.06698251958441e-16\\
372 6.96040288326628e-16\\
373 6.74042645951504e-16\\
374 6.44772161945835e-16\\
375 6.31372938541487e-16\\
376 5.94889800459994e-16\\
377 5.80385176035403e-16\\
378 5.5516643360077e-16\\
379 5.31190354441509e-16\\
380 5.15445612551447e-16\\
381 4.95398957503591e-16\\
382 4.71427138202624e-16\\
383 4.42797777047703e-16\\
384 4.25667544405674e-16\\
385 4.14003375747016e-16\\
386 4.04113827858262e-16\\
387 3.81093296424355e-16\\
388 3.64146415369817e-16\\
389 3.43335810601689e-16\\
390 3.15687630730482e-16\\
391 3.04132458394402e-16\\
392 2.61932606754952e-16\\
393 2.38594712011038e-16\\
394 2.03790083156174e-16\\
395 1.71738790919171e-16\\
396 1.69245914299688e-16\\
397 1.51051096675349e-16\\
398 1.22201371100045e-16\\
399 1.06099499080366e-16\\
400 8.16167269072216e-17\\
401 3.95619830132959e-17\\
};
\addplot [
color=black,
dotted,
forget plot
]
table[row sep=crcr]{
183 1e-18\\
183 10\\
};
\addplot [
color=black,
dotted,
forget plot
]
table[row sep=crcr]{
236 1e-18\\
236 10\\
};
\end{axis}
\end{tikzpicture}%
              };
      \node  (inter) at (3.6,-0.3) {$I_\gamma$
            };
      \node  (inter2) at (0.8,-0.3) {$I_{\beta}$
            };
      \node  (inter3) at (-2.0,-0.3) {$I_\alpha$
};
    \end{tikzpicture}
    \caption{The subdivision of the spectrum of $\DM$ into three distinct
      intervals, with cutoff parameter $\tau=1e-14$. Due to rounding errors, the eigenvalues in region $I_\gamma$ don't decay past machine precision.}
\label{fig:intervals}
\end{figure}
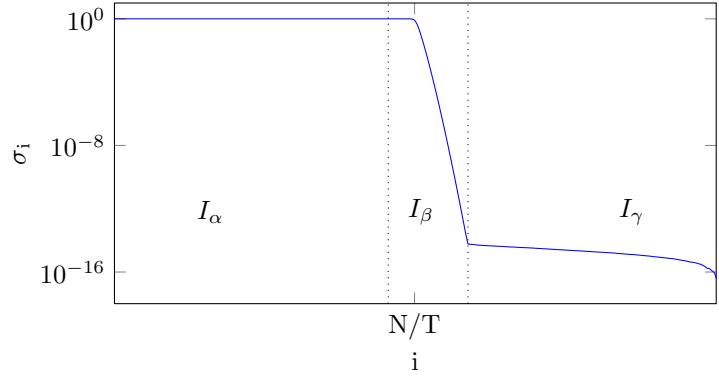

Let the subdivision of the singular values and associated vectors be denoted by
\begin{equation*}
\DM=\begin{bmatrix}U_\alpha & U_\beta & U_\gamma\end{bmatrix}\begin{bmatrix}\Sigma_\alpha & & \\ & \Sigma_\beta & \\ & & \Sigma_\gamma \end{bmatrix}\begin{bmatrix}V_\alpha &  V_\beta & V_\gamma\end{bmatrix}'.
\end{equation*}
The truncated SVD solution to the problem $\DM x=b$ with truncation cutoff at
$\cutoff$ is then
\begin{equation}
x=x_\alpha+x_\beta=V_\alpha\Sigma_\alpha^{-1}
U_\alpha'b_\alpha+V_\beta\Sigma_\beta^{-1} U_\beta'b_\beta,
\label{eqn:svdsubdivision}
\end{equation}
where the inverse operator applies to the diagonal elements. The right hand side
is split along the orthogonal spans of $U_\alpha, U_\beta$ and $U_\gamma$, i.e.,
\[
 b_\alpha= U_\alpha U_\alpha'b, 
\]
and so on. Note that this solution method implicitly assumes $b_\gamma$ to be below the
required solution accuracy. If it is not, a large solution term $x_\gamma$ is needed, with 
$||x_\gamma||\geq\frac{1}{\cutoff}||b_\gamma||$. 
In this case, we say the
Fourier extension has not yet converged, and $\bN$ should be increased.


Equation (\ref{eqn:svdsubdivision}) splits the problem into two orthogonal
subproblems. The isolation of $V_\beta, S_\beta$ and $ U_\beta$ from the plunge region and
 efficient calculation of $x_\beta$ is where the two approaches given in sections~\ref{sec:tsvd} and~\ref{sec:pa}
differ. 

The subsequent calculation of $x_\alpha$ is then straightforward,
based on the following observation:
\begin{align*}
  A'(b-b_\beta)&=V_\alpha\Sigma_\alpha U_\alpha' b_\alpha+V_\gamma\Sigma_\gamma
                 U_\gamma'b_\gamma\\
               &=V_\alpha\Sigma_\alpha^{-1} U_\alpha'b_\alpha+{\mathcal O}(\tau)\\
               &=x_\alpha+{\mathcal O}(\tau)
\end{align*}
The $b_\gamma$ term, which is already assumed to be negligible, is fully
eliminated by the additional ${\mathcal O}(\cutoff)$ factor $\Sigma_\gamma$. Noting that
$\Sigma_\alpha=\Sigma_\alpha^{-1}+{\mathcal O}(\cutoff)$ then yields $x_\alpha$ at the cost
of a single fast multiplication with $A'$ (${\mathcal O}(N\log{N})$).

\subsection{Approach 1: explicit projection onto prolate sequences}
\label{sec:tsvd}
Due to the intrinsic connection of the FE problem with DPSSs, it is possible to
explicitly compute  $U_\beta, \Sigma_\beta$ and $ V_\beta'$. The second order difference equation from (\ref{eqn:2ndorderdiff}) implies that

\begin{equation}
T_{\bM,\bN}\hat{\phi}_{\bM,\bN,i}=\theta_i\hat{\phi}_{\bM,\bN,i},\qquad T_{\bM,\bN}=\begin{bmatrix} c_0 & b_1 & & \\
b_1 & c_1 & \ddots & \\
& \ddots & \ddots & b_{N-1} \\
& & b_{N-1} & c_{N-1}  \end{bmatrix}.
\label{eqn:tridiag}
\end{equation}

The P-DPSSs can thus be found as eigenvectors of a tridiagonal matrix. The
${\hat{\phi}_{\bM,\bN,i}}$ that make up $U$ are eigenvectors of
$T_{\bM,\bN}$. The dual matrix $T_{\bN,\bM}$ yields the right singular vectors
$V$. With the P-DPSSs known, the original singular value is found in ${\mathcal O}(N\log{N})$
operations as  
\begin{align*}
\sigma_i=\hat{\phi}_{m,n,i}'\DM\hat{\phi}_{n,m,i}
\end{align*}
This approach is already in use for the regular DPSSs~\cite{Prolate1994}. 


In this stage of the algorithm however, we are only interested in a subset of
the singular values and vectors. Since the number of singular values in the plunge region is logarithmically small, we look to algorithms that
calculate $k$ eigenvalues and eigenvectors of a tridiagonal matrix $T$ in
${\mathcal O}(kN)$ operations~\cite{Dhillon1997}. Algorithms for this computation require as input for the desired eigenvalues $\theta_i$ of $T$ either:
\begin{itemize}
\item A range $[C_1,C_2]$: the algorithm then finds all $\theta : C_1\leq \theta \leq C_2$.
\item A index set $\{i_{\min},\dots,i_{\max}\}$: the algorithm then finds all $\theta : \theta_{i_{\min}} \leq \theta \leq \theta_{i_{\max}}$, from the ordered set $\theta_0\geq \theta_1\geq\dots$.
\end{itemize}

The algorithms thus require knowing the $\theta_j$, or alternatively
the indices $j$, that correspond to ${\sigma_i}\in I_\beta$. Denote the mappings between $\sigma_i$ and $\theta_j$, and between their indices $i$ and $j$, by
\begin{equation*}
\theta_j=\mathcal{G}_{\bN,\bM}(\sigma_i), \quad j=\mathcal{G}_{\bN,\bM}'(i)\Leftrightarrow\begin{cases}\hat{\phi}_i'T_{\bN,\bM}\hat{\phi}_i=\theta_j&\\
\hat{\phi}_i'\DM_{\bN,\bM}\hat{\phi}_i=\sigma_i.
\end{cases}\label{eqn:mappings}
\end{equation*}
The description of $\mathcal{G}_{\bN,\bM}(I_\beta)$ is difficult, as very little
is known of this mapping. The only known trait is monotonicity, a trait shared with the PSWF and DPSWF equivalents. This is already very helpful, since it implies that $\mathcal{G}_{\bN,\bM}'(i) = i$.

\begin{thm} For $\bM \geq \bN$, the mapping $\mathcal{G}_{\bN,\bM}'$ is monotone,
\begin{equation*}
\forall i_1, i_2 : i_1>i_2 \Leftrightarrow \mathcal{G}_{\bN,\bM}'(i_1)>\mathcal{G}_{\bN,\bM}'(i_2).
\end{equation*}
\label{thm:mapping}
\end{thm}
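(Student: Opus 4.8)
The plan is to reduce the claimed monotonicity to a single intrinsic ordering of the common eigenvectors $\hat{\phi}_i$. Both orderings that enter the definition of $\mathcal{G}_{N,M}'$ — the ordering by singular value $\sigma_i$ and the ordering by tridiagonal eigenvalue $\theta_j$ — will be shown to coincide with the ordering of the $\hat{\phi}_i$ by their number of sign changes (nodes). Since $i$ and $j=\mathcal{G}_{N,M}'(i)$ would then both equal this node count, $\mathcal{G}_{N,M}'$ is the identity on $\{0,\dots,N-1\}$, which is in particular the asserted monotone increasing bijection. The hypothesis $M\ge N$ is what makes all $N$ singular values nonzero, so that the $\sigma$-rank and the $\theta$-rank range over the same index set and $\mathcal{G}_{N,M}'$ is genuinely a self-bijection; this is where the assumption is used.

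First I would pin down the $\theta$-ordering. By (\ref{eqn:2ndorderdiff})--(\ref{eqn:tridiag}) the $\hat{\phi}_i$ are eigenvectors of the symmetric tridiagonal matrix $T_{N,M}$, whose off-diagonal entries are $b_k=\sin(\pi k/L)\sin(\pi(N-k)/L)$. For $1\le k\le N-1$ both $k$ and $N-k$ lie in $\{1,\dots,N-1\}$, and since $N\le L$ the two arguments lie strictly between $0$ and $\pi$, so $b_k>0$ and $T_{N,M}$ is an \emph{irreducible} Jacobi matrix. Classical oscillation theory for such matrices then supplies exactly what is needed: the spectrum is simple, and the eigenvector belonging to the $j$-th largest eigenvalue $\theta_j$ has precisely $j$ sign changes in its entries (the eigenvector of the largest eigenvalue being the sign-definite Perron vector). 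Hence the $\theta$-rank of $\hat{\phi}_i$ equals its node count.

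Next I would show the concentration ordering agrees with the node count, i.e.\ that the $\hat{\phi}_i$ ordered by $\sigma_0>\sigma_1>\dots$ (equivalently $\upsilon_0>\upsilon_1>\dots$, which are distinct by property (\ref{eqn:pdpsseigs})) carry strictly increasing numbers of sign changes, with $\hat{\phi}_i$ having exactly $i$. This is the discrete analogue of Slepian's result that $\psi_n$ has $n$ zeros in $(-T,T)$, and it is the substantive step: unlike the $\theta$-ordering, it does \emph{not} come for free, because the prolate matrix $D_N B_M D_N$ whose eigenvalues are the $\upsilon_i$ is a Dirichlet-kernel Toeplitz matrix that need not be totally positive, so oscillation theory cannot be applied to it directly. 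I would therefore either invoke the established node-counting property of P-DPSS (as recorded in the cited work of Slepian and of Xu--Hua / Gr\"unbaum), or prove it in place by a continuity argument: deform a parameter while tracking the always-simple spectra of the two commuting matrices $T_{N,M}$ and $D_N B_M D_N$; the common eigenvectors then vary continuously and, being eigenvectors of the Jacobi matrix throughout, never have two consecutive vanishing entries, so each branch keeps a constant node count that can be read off in a limiting regime.

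Combining the two steps, the node count of $\hat{\phi}_i$ equals both its $\sigma$-rank $i$ and its $\theta$-rank $j$, whence $\mathcal{G}_{N,M}'(i)=i$ and monotonicity is immediate. I expect the main obstacle to be precisely the second step: separating the concentration ordering from the node ordering without circularly leaning on the very tridiagonal matrix that already delivers the $\theta$-ordering, together with the bookkeeping needed to make ``number of sign changes'' unambiguous in the presence of possible zero entries.
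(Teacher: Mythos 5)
Your step 1 (oscillation theory for the irreducible Jacobi matrix $T_{N,M}$, whose off-diagonal entries are strictly positive) is sound, but observe that it makes your node count a pure relabeling of the $\theta$-rank: the eigenvector of the $j$-th largest $\theta$ has exactly $j$ sign changes, always. Consequently your step 2 --- ``the $\sigma$-rank equals the node count'' --- is not an auxiliary lemma but is literally the statement of the theorem, and the entire burden of proof falls on it. There the proposal has a genuine gap. Your first option, invoking an ``established node-counting property of P-DPSS,'' is not available: Xu and Hua \cite{Xu1984} prove distinctness of the eigenvalues of the prolate-type matrix (their Prop.~5), not any correspondence between the concentration ordering and the Jacobi/node ordering, and the paper stresses that monotonicity is precisely the trait that had to be established here --- that is why it appears as a theorem with proof rather than a citation. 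Your second option, the continuity argument, is the right idea and is essentially the paper's proof, but as sketched it omits the ingredients that make it work. First, the natural deformation parameter is $m$, which is an \emph{integer} in the problem; to have a continuous path at all one must show that the commutation of $T_{M,N}(m)$ with $\DM'\DM$ persists for \emph{real} $m>n$, which the paper obtains by a substitution as in \cite[Thm.~4.7]{Chamzas1980a}. Without this, ``deform a parameter while tracking the spectra'' has no content. Second, you never identify the limiting regime in which the correspondence ``can be read off'': the paper takes $m\to\infty$, shows entrywise that $\DM'\DM\to\CM$ (the continuous-FE Gram/prolate matrix) while $T_{N,M}$ converges to an affine image of the tridiagonal matrix $\bar{T}$, and anchors the ordering in Slepian's proven monotonicity for that continuous pair \cite[\S 4.1]{Slepian1978a}. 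Third, constancy of the node count along each branch only controls the $\theta$-ranking; to conclude you must also forbid crossings in the $\sigma$ (equivalently $\upsilon$) spectrum along the whole path, i.e.\ you need simplicity of the prolate-type spectrum for all real $m\geq n$, not merely at integers. The paper gets this by noting that Xu's inductive distinctness argument uses only symmetry together with the commuting normal Jacobi matrix, hence applies along the entire homotopy.

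In short, your skeleton (continuity of the simple spectra of a commuting pair, anchored at a case where the ordering is known) coincides with the paper's, but the three load-bearing steps --- the real-$m$ commutation that creates the homotopy, distinctness of \emph{both} spectra along it, and a concrete anchor (the $m\to\infty$ continuous-FE limit with Slepian's result) --- are exactly the ones left unspecified or misattributed in your sketch, so the proof as proposed does not close.
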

\begin{proof}
The proof follows a mechanism used by Slepian in both \cite[p. 61]{Slepian1978a} and
\cite[\S4.1]{Slepian1978}. The continuity of eigenvalues and eigenvectors as a function of a parameter, combined with a known ordering result for a specific value of this parameter,  extends the known result to all parameter values.

First we recall a similar result from the continuous Fourier Extension. Let $\CM$
be the matrix of the continuous FE (\ref{eqn:continuousfe}) with eigenvalues $\bar{\sigma}_i$, and
$\bar{T}$ the tridiagonaln matrix (\ref{eqn:barTtridiag}) with eigenvalues
$\bar{\theta}_i$
\begin{align}
\bar{T}&=\begin{bmatrix}p
\bar{c}_0 & \bar{b}_1 & & \\
\bar{b}_1 & \bar{c}_1 & \ddots & \\
& \ddots & \ddots & \bar{b}_{N-1} \\
& & \bar{b}_{N-1} & \bar{c}_{N-1}
\end{bmatrix},\label{eqn:barTtridiag}
\qquad\bar{c}_i=\left(\frac{N-1}{2}-i\right)^2\cos\frac{\pi}{T},\qquad\bar{b}_i=\frac{i(N-i)}{2}.
\end{align}

 Then the mapping $\bar{\mathcal{G}}'$
\begin{equation*}
j=\bar{\mathcal{G}}'(i)\Leftrightarrow\begin{cases}\bar{\phi}_i'\bar{T}\bar{\phi}_i=\bar{\theta}_j&\\
\bar{\phi}_i'\CM\bar{\phi}_i=\bar{\sigma}_i & \end{cases},
\end{equation*}
was proven to be monotone in \cite[\S4.1]{Slepian1978a}. This result can be
extended to
the discrete FE case, since the limits of the entries of $\DM'\DM$ and $T_{N,M}$ for large $\sm$ can be written in terms of the corresponding matrices of the continuous problem:
\begin{align*}
\lim_{\sm\to\infty}(\DM'\DM)_{ij}&=\frac{\sin{\frac{(i-j)\pi}{T}}}{\pi(i-j)}=\CM_{ij}\\
\lim_{\sm\to\infty}b_k&=\frac{\pi^2k(N-k)}{L^2}=\frac{2\pi^2}{L^2}\bar{b}_k\\
\lim_{\sm\to\infty}c_k&=\cos{\frac{\pi}{T}}\left(\frac{2\pi^2}{L^2}\left(\frac{N-1}{2}-k\right)^2-1\right)=\frac{2\pi^2}{L^2}\bar{c}_k-\cos{\frac{\pi}{T}}\\
\lim_{\sm\to\infty}T_{\bN,\bM}&=\frac{2\pi^2}{L^2}\bar{T}-\cos{\frac{\pi}{T}}I
\end{align*}
The last line ensures that the eigenvalues of $T_{\bN,\bM}$ are those of
$\bar{T}$ under a linear mapping.
Since this mapping preserves the ordering of eigenvalues, the theorem holds in the limit $\sm\to\infty$. 
Further, when limited to the regime $T>1,2\sm+1\geq\bN$, we have the following:
\begin{enumerate}
\item The tridiagonal matrix $T_{\bM,\bN}(\sm)$ with diagonal elements $c_k$ and
  subdiagonal elements $b_k$ 
\begin{align*}
\notag b_k&=\sin\left(\frac{\pi k}{2T\sm}\right)\sin\left(\frac{\pi (2\sm+1-k)}{2T\sm}\right)\\
\notag c_k&=-\cos\left(\frac{\pi (k-\sm)}{T\sm}\right)\cos\left(\frac{\pi \bN}{2T\sm}\right).
\end{align*}
commutes with $\DM'\DM$ for integer values of $\sm$. However, by a substitution
as in \cite[Thm. 4.7]{Chamzas1980a}, it is easy to see that this relationship holds for
any real $\sm>\sn$.

\item  A classical result states that the eigenvalues of a matrix are
continuous as the matrix entries vary continuously in the
parameter. Thus all $\theta_i(\sm)$ are continuous. In general they may coincide with each other. However, since all subdiagonal entries are always non-zero,
\begin{equation*}
\forall 1\leq k \leq \bN-1: b_k^2 > 0,
\end{equation*}
 $T_{\bM,\bN}(\sm)$ is a so-called normal Jacobi matrix and such matrices are
 known to have distinct eigenvalues \cite[Ch. 2.1]{gantmakher1961}. As a result of this
 distinctness, the eigenvectors can be chosen to be continuous in $\sm$ as well
 \cite[Ch. 2 \S5.3]{kato2012perturbation}.

\item To prove similar statements for the matrix $\DM'\DM$, we note that the
  inductive proof in \cite[Prop. 5]{Xu1984} for the distinctness of eigenvalues of $\DM'\DM$ is
  dependent only on it being symmetric, and the commuting $T_{\bM,\bN}(\sm)$
  being a normal Jacobi matrix. The eigenvalues $\lambda_i(\sm)$ are thus continuous and distinct $\forall\sm\geq\sn$. Then the eigenvectors can also be chosen continuous in $\sm$.
\end{enumerate}
Combining these statements, the distinctness preserves the relative ordering of
the continuous eigenvalues. The continuity of the eigenvectors relates the
eigenvalues of $\DM$ and $T_{\bN,\bM}$ through the mapping $\mathcal{G}'$. Since
this mapping is monotone in the limit $\sm\to\infty$, the continuity ensures the
mapping is monotone for all $\sm\geq\sn$.
\end{proof}

The required index set for the eigenvalues of $T_{\bM,\bN}$ is exactly the index set
corresponding to $\sigma_i$ from the plunge region. From (\ref{eqn:pdpsseigs})
these are known to be centered around $\frac{\bN\bM}{L}$, and their number grows
as ${\mathcal O}(\log{\bN})$. All that remains is to determine the constants $C_{\min{}}$ and
$C_{\max{}}$ so that 
\begin{equation*}
\sigma_j\in I_\beta\Leftrightarrow j\in\left\{N-\frac{\bN\bM}{L}-C_{\min}\log{\bN},N-\frac{\bN\bM}{L}+C_{\max}\log{\bN}\right\}.
\end{equation*}
The minimum required index set with cutoff $\cutoff=1e-16$ for increasing $\bN$ is shown in Figure~\ref{fig:Tnlog}, with the value $\bN-\frac{\bN\bM}{L}$ as a dashed line. Experimentally, the choices $C_{\min}\geq6$ and $C_{\max}\geq3$ seem sufficient for all $\cutoff\geq\epsilon_{mach}$. The $V_\beta$ can be obtained by refining $\DM'\hat{\phi_{\bM,\bN,i}}$ as eigenvectors of $T_{\bN,\bM}$.

Using a fast tridiagonal eigenvector algorithm, $U_\beta$, $\Sigma_\beta$ and
$V_\beta$ can be computed in ${\mathcal O}(N\log{N})$ operations. The solution term

\begin{equation*}
x_\beta=V_\beta\Sigma_\beta^{-1} U_\beta'b
\end{equation*}
is then found in ${\mathcal O}(N\log^2{N})$ operations.

\begin{remark}
Monotonicity of the map $\mathcal{G}_{\bN,\bM}$ is also observed to hold for integer $\bM$ smaller than integer $\bN$. Specifically, a variant of Theorem~\ref{thm:mapping} holds for the $\bM$ nonzero singular values $\sigma_i$. The same index set can thus be used to compute both $U_\beta$ and $V_\beta$ with a fast tridiagonal eigenvalue algorithm. However, this altered algorithm is without proof.
\end{remark}
\begin{figure}[htbp]
\begin{center}
\input{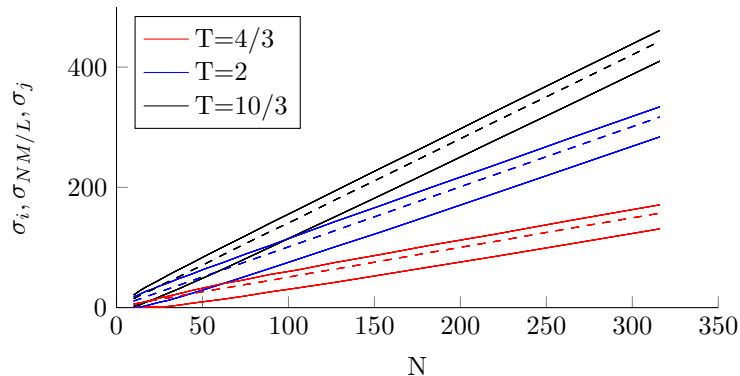}
\caption{The behaviour of the index set of the plunge region. The minimal and maximal index of the plunge region are shown as solid lines, for different values of $T$. The point $NM/L$, which is known to lie in the interval, is shown as a dashed line.}
\label{fig:Tnlog}
\end{center}
\end{figure}

Combining $x_\beta$ with the calculation of $x_\alpha$ described above leads to
a fast ${\mathcal O}(N\log^2{N})$ algorithm. A bare-bones version is given in Algorithm~\ref{alg:commutation}.

\begin{algorithm}[htbp]
\begin{algorithmic}
\STATE $U_\beta$, $V_\beta$ =eig$(T_N,T_M,\{j_{\min{}} ,j_{\max{}} \})$
\STATE $\Sigma_\beta=(U_\beta'\DM V_\beta)>\cutoff$
\STATE $x_\beta=V_\beta\Sigma_\beta^{-1}U_\beta'b$
\STATE $x_\alpha=A'(b-Ax_\beta)$
\STATE $x=x_\alpha+x_\beta$
\end{algorithmic}
\caption{Explicit projection on DPSSs}
\label{alg:commutation}
\end{algorithm}

\subsection{Approach 2: an implicit projection method}
\label{sec:pa}
The second approach to calculating $x_\beta$ is more universal, or more
generally applicable, since it depends solely on the steep singular value
profile discussed earlier and illustrated in Fig.~\ref{fig:intervals}. As such,
it is extensible to any ill-conditioned basis that exhibits a similar profile. 

The approach is based on the observation that multiplying both the FE matrix $\DM$ and right hand side by a factor 
\begin{equation}\label{eq:P}
P=(\DM\DM'-I)
\end{equation}
isolates the problem to the plunge region. This is most easily seen from the SVD. With $\DM=U\Sigma V'$ we have
\begin{align}
  P&=U(\Sigma^2-I) U',
\label{eqn:svP}
\end{align}
and
\begin{equation*}
  P\DM= U(\Sigma^3-\Sigma)V'.
\end{equation*}
Note here that the mapping
\begin{equation*} 
\mathcal{P}(\sigma)=\sigma^3-\sigma
\end{equation*} 
isolates the singular values from the plunge region since $\forall \sigma
\in \{I_\alpha \cup I_\gamma\}: \mathcal{P}(\sigma)={\mathcal O}(\cutoff)$. This way,
$P\DM$ preserves the singular vectors of just the plunge region, but
with mapped singular values
\begin{equation}
\label{eqn:PDMapprox}
P\DM = U_\beta(\Sigma_\beta^3-\Sigma_\beta)V_\beta'+{\mathcal O}(\cutoff).
\end{equation}

In theory, $P$ is a square full rank matrix, and solving
\begin{equation}
\label{eqn:projected_equation}
P\DM x=Pb
\end{equation}
is equivalent to solving $\DM x=b$. In practice, $P\DM$ has a
large numerical nullspace. Hence, $P \DM$ is approximately low rank, with the rank increasing with the size of the plunge region. The combination of this low rank with a fast matrix-vector product allows random matrix algorithms to solve \eqref{eqn:projected_equation} very efficiently.

Assume $W$ a uniform random matrix of dimensions $\bN\times R$, where
$R=C\log{N}+D$ is a conservative estimate for the rank of $P\DM$. From the
previous section, $C=C_{\min}+C_{\max}\geq 9$ is sufficient, with $D\sim 10$
minimizing the chance of failure of the random matrix algorithm. The column space
of $P\DM W$ then approximates the column space of $P\DM$ very well.

Therefore, solving the following small linear system 
\[
P\DM Wy =Pb
\]
and letting
\[
  x_W =Wy
\]
one obtains a solution to \eqref{eqn:projected_equation} a cost of ${\mathcal O}(\bM R^2)$. It follows from (\ref{eqn:svP}) and (\ref{eqn:PDMapprox}) that $x_\beta$ is recovered exactly. On the other hand, this solution process introduces additional solution terms from the nullspace of $PA$. Write $x_W$ as $x_\beta+r_\alpha+r_\gamma$. Then as before we calculate
\begin{align*}
\DM'(b-\DM x_W)  &=\DM'(b_\alpha-\DM r_\alpha-\DM r_\gamma)+{\mathcal O}(\cutoff)\\
                 &=x_\alpha -r_\alpha-V_\gamma \Sigma_\gamma^2 V_\gamma' r_\gamma
                   +{\mathcal O}(\cutoff) \\
                 &=x_\alpha - r_\alpha + {\mathcal O}(\cutoff).
\end{align*}
Then $x =  x_W+\DM'(b-\DM x_W)$ boils down to
\begin{align*}
x=x_\alpha+x_\beta+r_\gamma+{\mathcal O}(\cutoff).
\end{align*}
so that $||\DM x-b||={\mathcal O}(\cutoff)$. The total cost of this algorithm is again
${\mathcal O}(N\log^2{N})$ operations. A step by step version is given in Algorithm~\ref{alg:dpss2}.



\begin{algorithm}[htbp]
\begin{algorithmic}
\STATE $PAWy=Pb$
\STATE $x_\beta=Wy$
\STATE $x_\alpha=\DM'(b-\DM x_\beta)$
\STATE $x=x_\alpha+x_\beta$
\end{algorithmic}
\caption{Implicit projection on DPSSs}
\label{alg:dpss2}
\end{algorithm}

\subsection{Adaptation for the continuous FE}
\label{sec:continuousalg}
As mentioned in section \ref{sec:pdpss}, the continuous FE also has numerous connections
with Prolate Spheroidal Wave theory. The solution is most easily expressed in terms of 
the eigenvectors of $\CM$, the DPSSs
\begin{equation*}
  \CM=\Psi\Lambda\Psi.
\end{equation*}

The eigenvalues $\lambda_i$ have a similar profile to that of Figure
\ref{fig:intervals}. Our second approach thus applies immediately, and Algorithm
\ref{alg:dpss2} is well suited to solve the continuous FE problem. Note however
that this does not eliminate the theoretical ${\mathcal O}(\sqrt{\cutoff})$ error bound.

Furthermore, these DPSS also satisfy a second order difference equation, different from
(\ref{eqn:2ndorderdiff}). In particular, the matrix $\CM$ commutes with the
tridiagonal matrix (\ref{eqn:barTtridiag}).

It follows that, with minor modifications, Algorithm~\ref{alg:commutation} can also be used to solve the continuous FE problem.


  
\section{Numerical Results}
\label{sec:results}
In this section we apply the algorithms from the previous section to a number of 
test problems. All tests were performed in \begin{sc} matlab \end{sc},
single threaded. The required fast matrix-vector products $\DM x$ and $\DM'y$
were implemented using ffts, and the \begin{sc} lapack \end{sc} routine \emph{dstevx}
was used for the tridiagonal eigenvalue problem.

To show the validity of our algorithms for different values of $T$, experiments are carried out for 
\begin{equation*}
T_1=1.1,\quad T_2=2,\quad T_3=3.8.
\end{equation*}
Following \cite{Adcock2013}, the product $L=2T\sm$ is held constant when varying $T$ in order to maintain a fixed condition number. Thus, in our computations we use
\begin{equation*}
M_1\approx\frac{2}{1.1}M_2,\quad M_3\approx\frac{2}{3.8}M_2,
\end{equation*}
with the restriction that $\bM$ is odd.
\subsection{Computational complexity}

Figure \ref{fig:timings} shows execution time for increasing degrees
of freedom of the algorithm for different values of $T$. The figure confirms 
the theorized ${\mathcal O}(N\log^2{N})$ asymptotic complexity of our algorithms. It also shows execution speed is on par
with the current fast algorithm by Lyon. Also, as in Lyons algorithm, the
majority of the work is in computing a low-rank matrix decomposition related to
$\DM$, in this case the middle part of the SVD.
This can provide a significant speedup when multiple approximations are needed with the same parameters.

Figure~\ref{fig:timings2} compares our two approaches in more
detail. The figure shows the execution time per degree of freedom. The
difference between algorithms is very much dependent on implementation, but it highlights a trend seen for
different values of $T$.  Both algorithms appear to be slightly faster for
larger $T$, which is somewhat counterintuitive. After all, the cost of the FFT
operations is ${\mathcal O}(L)$, which is held constant for all experiments. However, both approaches solve a subproblem
with a cost that grows as ${\mathcal O}(\bM)$.  This is either the tridiagonal eigenvalue
problem with a matrix of size $\bM\times\bM$, or a linear solve of an
$\bM\times\log{\bM}$ system. Due to our scaling of $\bM$, these costs
actually decrease with $T$.

\setlength{\figurewidth}{5.75cm}
\setlength{\figureheight}{5.75cm}
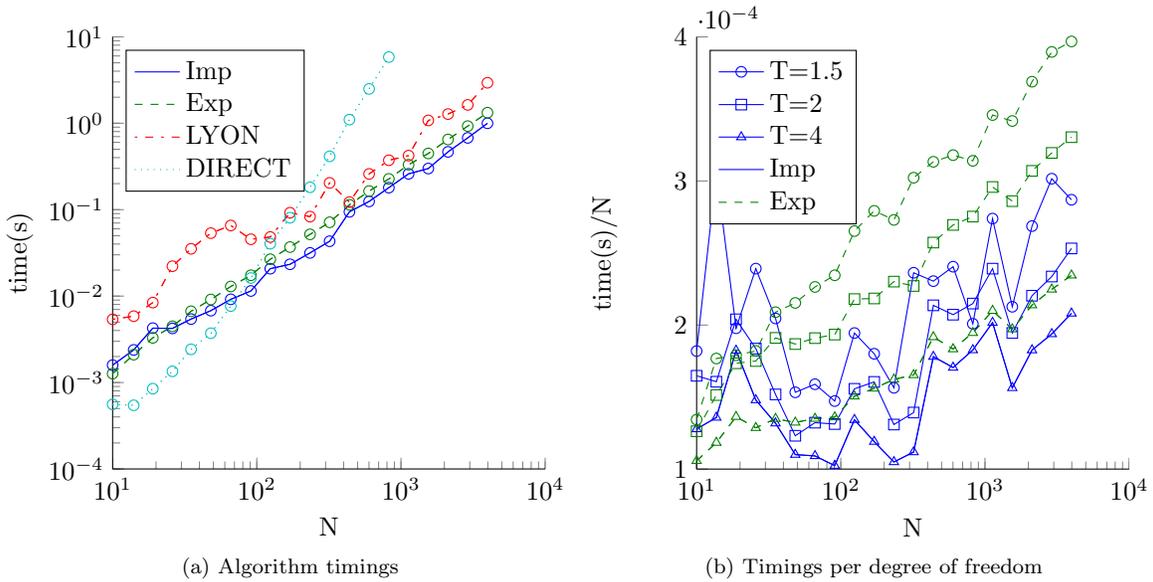
\begin{figure}[htbp]
\begin{center}
\subfloat[][Algorithm timings]{
%
%
\definecolor{mycolor1}{rgb}{0,0.75,0.75}%
\begin{tikzpicture}

\begin{axis}[%
width=\figurewidth,
height=\figureheight,
unbounded coords=jump,
scale only axis,
xmode=log,
xmin=10,
xmax=10000,
xminorticks=true,
xlabel={N},
ymode=log,
ymin=0.0001,
ymax=10,
yminorticks=true,
ylabel={time(s)},
axis x line*=bottom,
axis y line*=left,
legend style={at={(0.03,0.97)},anchor=north west,draw=black,fill=white,legend cell align=left}
]
\addplot [
color=blue,
solid,
mark=o,
mark options={solid},
forget plot
]
table[row sep=crcr]{
10 0.00158921033333333\\
14 0.00238267688888889\\
19 0.00425643955555556\\
26 0.00422985133333333\\
35 0.00542219611111111\\
48 0.00678248844444444\\
66 0.00917212322222222\\
91 0.0114609794444444\\
124 0.0207485328888889\\
170 0.0233258025555556\\
234 0.0315919745555555\\
320 0.0430963511111111\\
439 0.0943696772222222\\
601 0.124470477222222\\
824 0.179703761\\
1129 0.259623510666667\\
1547 0.298943706333333\\
2120 0.466132372222222\\
2905 0.676942902111111\\
3981 0.998114841777778\\
};
\addplot [
color=blue,
solid
]
table[row sep=crcr]{
10 0.00158921033333333\\
14 0.00238267688888889\\
19 0.00425643955555556\\
26 0.00422985133333333\\
35 0.00542219611111111\\
48 0.00678248844444444\\
66 0.00917212322222222\\
91 0.0114609794444444\\
124 0.0207485328888889\\
170 0.0233258025555556\\
234 0.0315919745555555\\
320 0.0430963511111111\\
439 0.0943696772222222\\
601 0.124470477222222\\
824 0.179703761\\
1129 0.259623510666667\\
1547 0.298943706333333\\
2120 0.466132372222222\\
2905 0.676942902111111\\
3981 0.998114841777778\\
};
\addlegendentry{\PA};

\addplot [
color=green!50!black,
dashed,
mark=o,
mark options={solid},
forget plot
]
table[row sep=crcr]{
10 0.00126661944444444\\
14 0.002100967\\
19 0.003289514\\
26 0.00447091033333333\\
35 0.00667287788888889\\
48 0.009128971\\
66 0.0129163621111111\\
91 0.0174196573333333\\
124 0.0266822212222222\\
170 0.0369045878888889\\
234 0.0517871566666667\\
320 0.0714504551111111\\
439 0.113912137777778\\
601 0.164097074666667\\
824 0.226815048222222\\
1129 0.331138090555556\\
1547 0.446875352111111\\
2120 0.648517979222222\\
2905 0.925644533222222\\
3981 1.32352836255556\\
};
\addplot [
color=green!50!black,
dashed
]
table[row sep=crcr]{
10 0.00126661944444444\\
14 0.002100967\\
19 0.003289514\\
26 0.00447091033333333\\
35 0.00667287788888889\\
48 0.009128971\\
66 0.0129163621111111\\
91 0.0174196573333333\\
124 0.0266822212222222\\
170 0.0369045878888889\\
234 0.0517871566666667\\
320 0.0714504551111111\\
439 0.113912137777778\\
601 0.164097074666667\\
824 0.226815048222222\\
1129 0.331138090555556\\
1547 0.446875352111111\\
2120 0.648517979222222\\
2905 0.925644533222222\\
3981 1.32352836255556\\
};
\addlegendentry{\TRID};

\addplot [
color=red,
dash pattern=on 1pt off 3pt on 3pt off 3pt,
mark=o,
mark options={solid},
forget plot
]
table[row sep=crcr]{
10 0.00538251677777778\\
14 0.00585620355555556\\
19 0.00841398922222222\\
26 0.0221634881111111\\
35 0.0352249572222222\\
48 0.0534908131111111\\
66 0.0655445123333333\\
91 0.0454206701111111\\
124 0.0482299484444444\\
170 0.0921030048888889\\
234 0.0830119571111111\\
320 0.203923882111111\\
439 0.122364598777778\\
601 0.258999347111111\\
824 0.372781147777778\\
1129 0.421550786777778\\
1547 1.07728683344444\\
2120 1.27526472544444\\
2905 1.62870139933333\\
3981 2.937998262\\
};
\addplot [
color=red,
dash pattern=on 1pt off 3pt on 3pt off 3pt
]
table[row sep=crcr]{
10 0.00538251677777778\\
14 0.00585620355555556\\
19 0.00841398922222222\\
26 0.0221634881111111\\
35 0.0352249572222222\\
48 0.0534908131111111\\
66 0.0655445123333333\\
91 0.0454206701111111\\
124 0.0482299484444444\\
170 0.0921030048888889\\
234 0.0830119571111111\\
320 0.203923882111111\\
439 0.122364598777778\\
601 0.258999347111111\\
824 0.372781147777778\\
1129 0.421550786777778\\
1547 1.07728683344444\\
2120 1.27526472544444\\
2905 1.62870139933333\\
3981 2.937998262\\
};
\addlegendentry{LYON};

\addplot [
color=mycolor1,
dotted,
mark=o,
mark options={solid},
forget plot
]
table[row sep=crcr]{
10 0.000559158777777778\\
14 0.000545093555555556\\
19 0.000848936666666667\\
26 0.00134784277777778\\
35 0.00242280988888889\\
48 0.00371684088888889\\
66 0.007660124\\
91 0.01617018\\
124 0.0404058953333333\\
170 0.0806618271111111\\
234 0.181522757444444\\
320 0.414113659111111\\
439 1.10100058344444\\
601 2.49419437244444\\
824 5.821768966\\
1129 NaN\\
1547 NaN\\
2120 NaN\\
2905 NaN\\
3981 NaN\\
};
\addplot [
color=mycolor1,
dotted
]
table[row sep=crcr]{
10 0.000559158777777778\\
14 0.000545093555555556\\
19 0.000848936666666667\\
26 0.00134784277777778\\
35 0.00242280988888889\\
48 0.00371684088888889\\
66 0.007660124\\
91 0.01617018\\
124 0.0404058953333333\\
170 0.0806618271111111\\
234 0.181522757444444\\
320 0.414113659111111\\
439 1.10100058344444\\
601 2.49419437244444\\
824 5.821768966\\
1129 NaN\\
1547 NaN\\
2120 NaN\\
2905 NaN\\
3981 NaN\\
};
\addlegendentry{DIRECT};

\end{axis}
\end{tikzpicture}
\subfloat[][Timings per degree of freedom]{
%
\begin{tikzpicture}

\begin{axis}[%
width=\figurewidth,
height=\figureheight,
scale only axis,
xmode=log,
xmin=10,
xmax=10000,
xminorticks=true,
xlabel={N},
ymin=0.0001,
ymax=0.0004,
ylabel={time(s)/N},
axis x line*=bottom,
axis y line*=left,
legend style={at={(0.03,0.97)},anchor=north west,draw=black,fill=white,legend cell align=left}
]
\addplot [
color=blue,
solid,
mark=o,
mark options={solid}
]
table[row sep=crcr]{
10 0.000181925622222222\\
13.7038345066317 0.000306036013997349\\
18.7795080185149 0.000197762990542942\\
25.7351270001691 0.000239144807793626\\
35.2669921417466 0.000204580153523389\\
48.3293023857175 0.00015330130240385\\
66.2296761714833 0.000158902773357566\\
90.7600521681814 0.000147225259384621\\
124.376073472602 0.000194427019177038\\
170.442912745319 0.000180047498701021\\
233.572146909012 0.00015641967463036\\
320.083404659976 0.000236271855012651\\
438.637000577954 0.00023046697631304\\
601.100886440559 0.000240496494265583\\
823.738706957101 0.000200857566769592\\
1128.83789168469 0.000273875412393994\\
1546.9407652462 0.000212686125094614\\
2119.90202384961 0.000268668021935362\\
2905.07865051086 0.000301568058200936\\
3981.07170553497 0.000286953255764033\\
};
\addlegendentry{T=1.5};

\addplot [
color=blue,
solid,
mark=square,
mark options={solid}
]
table[row sep=crcr]{
10 0.000164767911111111\\
13.7038345066317 0.000160643204485836\\
18.7795080185149 0.000203967483931078\\
25.7351270001691 0.000183649651741764\\
35.2669921417466 0.000151805070324806\\
48.3293023857175 0.000123119755410485\\
66.2296761714833 0.00013222048811529\\
90.7600521681814 0.000131276781699918\\
124.376073472602 0.000155703453006797\\
170.442912745319 0.000160512808159029\\
233.572146909012 0.000130931745949628\\
320.083404659976 0.000139243131689419\\
438.637000577954 0.000213667444553264\\
601.100886440559 0.000207176831392537\\
823.738706957101 0.00021488695782144\\
1128.83789168469 0.000239122211917168\\
1546.9407652462 0.000194495403073034\\
2119.90202384961 0.00022034465763584\\
2905.07865051086 0.000233656445905927\\
3981.07170553497 0.000253162993003136\\
};
\addlegendentry{T=2};

\addplot [
color=blue,
solid,
mark=triangle,
mark options={solid}
]
table[row sep=crcr]{
10 0.000127818788888889\\
13.7038345066317 0.00013581958467248\\
18.7795080185149 0.000182292315465606\\
25.7351270001691 0.000147936532212156\\
35.2669921417466 0.000131829706472721\\
48.3293023857175 0.000110042487769042\\
66.2296761714833 0.000109129882487479\\
90.7600521681814 0.000102409208556731\\
124.376073472602 0.000134237949831959\\
170.442912745319 0.000119070467680039\\
233.572146909012 0.000104932336476997\\
320.083404659976 0.000111830432426138\\
438.637000577954 0.000178039377301837\\
601.100886440559 0.000170506769386302\\
823.738706957101 0.000182666718161222\\
1128.83789168469 0.00020166396059878\\
1546.9407652462 0.000156386422581423\\
2119.90202384961 0.000182603099252538\\
2905.07865051086 0.00019378828258073\\
3981.07170553497 0.000208084805032505\\
};
\addlegendentry{T=4};

\addplot [
color=blue,
solid
]
table[row sep=crcr]{
10 0.000127818788888889\\
13.7038345066317 0.00013581958467248\\
18.7795080185149 0.000182292315465606\\
25.7351270001691 0.000147936532212156\\
35.2669921417466 0.000131829706472721\\
48.3293023857175 0.000110042487769042\\
66.2296761714833 0.000109129882487479\\
90.7600521681814 0.000102409208556731\\
124.376073472602 0.000134237949831959\\
170.442912745319 0.000119070467680039\\
233.572146909012 0.000104932336476997\\
320.083404659976 0.000111830432426138\\
438.637000577954 0.000178039377301837\\
601.100886440559 0.000170506769386302\\
823.738706957101 0.000182666718161222\\
1128.83789168469 0.00020166396059878\\
1546.9407652462 0.000156386422581423\\
2119.90202384961 0.000182603099252538\\
2905.07865051086 0.00019378828258073\\
3981.07170553497 0.000208084805032505\\
};
\addlegendentry{\PA};

\addplot [
color=green!50!black,
dashed,
mark=o,
mark options={solid},
forget plot
]
table[row sep=crcr]{
10 0.000134326688888889\\
13.7038345066317 0.000176762131710491\\
18.7795080185149 0.00017900036435798\\
25.7351270001691 0.000182025688942256\\
35.2669921417466 0.00020892385634663\\
48.3293023857175 0.000215338910755908\\
66.2296761714833 0.000226429033364541\\
90.7600521681814 0.000234528207342698\\
124.376073472602 0.000265227801734892\\
170.442912745319 0.000279249941031299\\
233.572146909012 0.000272984259940484\\
320.083404659976 0.000302256288108882\\
438.637000577954 0.000313327211432536\\
601.100886440559 0.000317942781467977\\
823.738706957101 0.000313985580262842\\
1128.83789168469 0.000345773938635571\\
1546.9407652462 0.000341628313037501\\
2119.90202384961 0.000368904696475796\\
2905.07865051086 0.00038956839614533\\
3981.07170553497 0.000396857689580048\\
};
\addplot [
color=green!50!black,
dashed,
mark=square,
mark options={solid},
forget plot
]
table[row sep=crcr]{
10 0.000126261111111111\\
13.7038345066317 0.00015138620095277\\
18.7795080185149 0.000173441829200795\\
25.7351270001691 0.000175019545160691\\
35.2669921417466 0.000191072893178366\\
48.3293023857175 0.000186797245354476\\
66.2296761714833 0.00019090835706231\\
90.7600521681814 0.000193372578728929\\
124.376073472602 0.000218011325728496\\
170.442912745319 0.000218438019056\\
233.572146909012 0.000230139269316055\\
320.083404659976 0.000227111000332696\\
438.637000577954 0.000257244758868424\\
601.100886440559 0.000269479289100289\\
823.738706957101 0.000275274317607432\\
1128.83789168469 0.000295810224749008\\
1546.9407652462 0.000285994425359791\\
2119.90202384961 0.000307015101175027\\
2905.07865051086 0.000319469489235158\\
3981.07170553497 0.000330399818653555\\
};
\addplot [
color=green!50!black,
dashed,
mark=triangle,
mark options={solid},
forget plot
]
table[row sep=crcr]{
10 0.000105739422222222\\
13.7038345066317 0.000118363781019226\\
18.7795080185149 0.000136340502266152\\
25.7351270001691 0.000128669062586903\\
35.2669921417466 0.000134959208151444\\
48.3293023857175 0.000132391068857863\\
66.2296761714833 0.000135046547531848\\
90.7600521681814 0.000136066966743431\\
124.376073472602 0.000150536079180606\\
170.442912745319 0.000156017681062132\\
233.572146909012 0.00016225956757729\\
320.083404659976 0.000165351191132341\\
438.637000577954 0.000191681172967613\\
601.100886440559 0.00018330083673124\\
823.738706957101 0.000194610226629538\\
1128.83789168469 0.000209956026233571\\
1546.9407652462 0.000197146991422933\\
2119.90202384961 0.000213823071176751\\
2905.07865051086 0.000224735933319423\\
3981.07170553497 0.000234504428245574\\
};
\addplot [
color=green!50!black,
dashed
]
table[row sep=crcr]{
10 0.000105739422222222\\
13.7038345066317 0.000118363781019226\\
18.7795080185149 0.000136340502266152\\
25.7351270001691 0.000128669062586903\\
35.2669921417466 0.000134959208151444\\
48.3293023857175 0.000132391068857863\\
66.2296761714833 0.000135046547531848\\
90.7600521681814 0.000136066966743431\\
124.376073472602 0.000150536079180606\\
170.442912745319 0.000156017681062132\\
233.572146909012 0.00016225956757729\\
320.083404659976 0.000165351191132341\\
438.637000577954 0.000191681172967613\\
601.100886440559 0.00018330083673124\\
823.738706957101 0.000194610226629538\\
1128.83789168469 0.000209956026233571\\
1546.9407652462 0.000197146991422933\\
2119.90202384961 0.000213823071176751\\
2905.07865051086 0.000224735933319423\\
3981.07170553497 0.000234504428245574\\
};
\addlegendentry{\TRID};

\end{axis}
\end{tikzpicture}
\end{center}
\caption{Execution time for increasing degrees of freedom $N$, for MATLAB
  implementations of the Explicit and Implicit projection algorithms, the Lyon algorithm and a direct solver.}
\label{fig:lyonvsexplicitspeed}
\end{figure}

\subsection{Convergence}
The accuracy of the solution obtained through our algorithms is shown in Figures
\ref{fig:accuracyf1} through \ref{fig:accuracyf4}. The accuracy is measured as the
maximum pointwise error over an equi-sampled grid ten times denser than the one
used for construction. This is measured for increasing number of degrees of
freedom $N$ for four test functions:
\begin{itemize}
\item A well-behaved, smooth function to show convergence in near optimal conditions,
  \begin{equation*}
    f_1(x)=x^2.
  \end{equation*}
\item A highly oscillatory function, to show the resolution power of Fourier
  extensions for oscillatory functions,
  \begin{equation*}
    f_2(x)=\mbox{Ai}(76x).
  \end{equation*}
\item A function with a pole in the complex plane near the real interval   $[-1,1]$, to
  show convergence at a slower, but still exponential rate,
  \begin{equation*}
    f_3(x)=\frac{1}{1.1-x^2}.
  \end{equation*}
\item A function with discontinuous first derivative, to see the breakdown of
  the algorithm to algebraic convergence speeds,
  \begin{equation*}
    f_4(x)=|x|.
  \end{equation*}
\end{itemize}
 The convergence behaviour seen is in accordance with \cite{Huybrechs2010},
 \cite{Adcock2012} and \cite{Adcock2013}. Convergence for functions analytic in
 $[-1,1]$ is at least geometric, even when singularities are present near the
 real interval. Following the earlier arguments about resolution power from \S\ref{sec:extensionlength}, Fourier extensions of oscillatory functions start to converge sooner for lower
values of $T$. Note that this is in terms of degrees of freedom, and that we
increased the oversampling for lower $T$ to maintain conditioning. 

When the approximant is in $C^k$, having continuous derivatives up to $f^{(k)}$,
the convergence rate becomes algebraic at a rate of ${\mathcal O}(N^{-k+1})$. The
convergence of order ${\mathcal O}(N^{-1})$ for the $C^0$ function is apparent from Figure~\ref{fig:accuracyf4}.

\setlength{\figureheight}{4cm}
\setlength{\figurewidth}{5.75cm}
\begin{figure}[htbp]
\begin{center}
\subfloat[][$f_1(x)=x^2$]{
%
\begin{tikzpicture}

\begin{axis}[%
width=\figurewidth,
height=\figureheight,
scale only axis,
xmode=log,
xmin=10,
xmax=10000,
xminorticks=true,
xlabel={N},
ymode=log,
ymin=1e-14,
ymax=1,
yminorticks=true,
ylabel={\resnorm},
axis x line*=bottom,
axis y line*=left,
legend style={draw=black,fill=white,legend cell align=left}
]
\addplot [
color=blue,
solid,
mark=o,
mark options={solid}
]
table[row sep=crcr]{
10 0.00140766632059471\\
14 0.000329396862810638\\
19 5.25141450899562e-05\\
26 5.88059036141169e-06\\
35 4.23310023522538e-07\\
48 8.13714845508564e-09\\
66 5.35060785855106e-11\\
91 7.33323589689033e-14\\
124 4.92270931951236e-14\\
170 2.01788359074169e-13\\
234 1.39184999664343e-13\\
320 3.54187674430475e-13\\
439 2.41611019996001e-13\\
601 5.7308385855121e-13\\
824 1.23111664197581e-12\\
1129 9.6470226530864e-13\\
1547 1.22699425820105e-12\\
2120 1.840615632148e-12\\
2905 2.17402564161438e-12\\
3981 2.69521683089178e-12\\
};
\addlegendentry{T=1.1};

\addplot [
color=blue,
solid,
mark=square,
mark options={solid}
]
table[row sep=crcr]{
10 0.396923272198611\\
14 3.16302629184718e-12\\
19 3.15629274218997e-14\\
26 2.96728824206225e-13\\
35 3.53501108848778e-13\\
48 3.45200409047392e-13\\
66 5.28780736676241e-13\\
91 1.00419639841351e-12\\
124 1.00189807093249e-12\\
170 7.63144848586714e-13\\
234 1.05949772152077e-12\\
320 2.29818247511655e-12\\
439 1.79263167891276e-12\\
601 3.11353633117069e-12\\
824 3.49291079134011e-12\\
1129 3.7209928344767e-12\\
1547 3.46164189674837e-12\\
2120 3.41704729705788e-12\\
2905 3.43518529638852e-12\\
3981 6.13107163233041e-12\\
};
\addlegendentry{T=2};

\addplot [
color=blue,
solid,
mark=triangle,
mark options={solid}
]
table[row sep=crcr]{
10 0.457008595215443\\
14 1.37433928156666e-12\\
19 6.64165525428504e-13\\
26 6.99340779982841e-13\\
35 1.31710078133248e-12\\
48 1.21148572191377e-12\\
66 4.78742096521801e-13\\
91 8.029883813088e-13\\
124 3.12428603653763e-12\\
170 1.11768693220807e-12\\
234 1.74800102960042e-12\\
320 1.31082771610787e-12\\
439 2.29865981117202e-12\\
601 1.1720620302799e-12\\
824 2.73248556569262e-12\\
1129 3.04728926633802e-12\\
1547 2.88662126589903e-12\\
2120 2.8200903362884e-12\\
2905 1.90677400208752e-12\\
3981 7.3724891559427e-12\\
};
\addlegendentry{T=3.8};

\addplot [
color=blue,
solid
]
table[row sep=crcr]{
10 0.457008595215443\\
14 1.37433928156666e-12\\
19 6.64165525428504e-13\\
26 6.99340779982841e-13\\
35 1.31710078133248e-12\\
48 1.21148572191377e-12\\
66 4.78742096521801e-13\\
91 8.029883813088e-13\\
124 3.12428603653763e-12\\
170 1.11768693220807e-12\\
234 1.74800102960042e-12\\
320 1.31082771610787e-12\\
439 2.29865981117202e-12\\
601 1.1720620302799e-12\\
824 2.73248556569262e-12\\
1129 3.04728926633802e-12\\
1547 2.88662126589903e-12\\
2120 2.8200903362884e-12\\
2905 1.90677400208752e-12\\
3981 7.3724891559427e-12\\
};
\addlegendentry{\PA};

\addplot [
color=green!50!black,
dashed,
mark=o,
mark options={solid},
forget plot
]
table[row sep=crcr]{
10 0.00140025077904249\\
14 0.000328746577774597\\
19 5.22337228000547e-05\\
26 5.84926431501653e-06\\
35 4.23185428810591e-07\\
48 8.13712408564982e-09\\
66 5.34940980437819e-11\\
91 7.76046334161932e-14\\
124 7.80489771246425e-14\\
170 1.94626205527819e-13\\
234 3.28628516449918e-14\\
320 3.23412087935581e-13\\
439 2.58580654054796e-13\\
601 2.25153714494088e-13\\
824 5.02166688606455e-13\\
1129 5.9243956119091e-13\\
1547 8.17346773048188e-13\\
2120 1.60205490382644e-12\\
2905 1.11384221114665e-12\\
3981 3.2730489430243e-12\\
};
\addplot [
color=green!50!black,
dashed,
mark=square,
mark options={solid},
forget plot
]
table[row sep=crcr]{
10 1.98395538886233e-09\\
14 3.16258130879448e-12\\
19 1.13209442186753e-12\\
26 2.44360087978274e-13\\
35 5.61772908575764e-14\\
48 2.82551759824028e-13\\
66 3.00759417765481e-13\\
91 9.93649898866142e-14\\
124 2.84772225170595e-13\\
170 3.1397107161861e-13\\
234 2.94875236836301e-13\\
320 2.36477551656206e-13\\
439 1.68864941390275e-13\\
601 2.80664406887882e-13\\
824 3.51274664761208e-13\\
1129 2.78332930785441e-13\\
1547 6.85340696352079e-13\\
2120 1.09134928468376e-13\\
2905 5.51337096929101e-13\\
3981 3.18412221175833e-13\\
};
\addplot [
color=green!50!black,
dashed,
mark=triangle,
mark options={solid},
forget plot
]
table[row sep=crcr]{
10 3.86308762763126e-11\\
14 2.12548867404332e-11\\
19 5.78537222740647e-13\\
26 4.9340531742801e-12\\
35 3.04423175969746e-13\\
48 5.10480547327146e-13\\
66 2.17714797496031e-13\\
91 5.27022894244452e-13\\
124 2.74336109391809e-13\\
170 2.31481507335914e-13\\
234 2.04392085260084e-13\\
320 5.29798555012396e-13\\
439 3.4530110713911e-14\\
601 4.30767295416904e-13\\
824 6.39510670230098e-14\\
1129 6.8060850313632e-14\\
1547 5.52225602115535e-13\\
2120 2.66857329587461e-13\\
2905 8.74752099259061e-13\\
3981 3.90137794929268e-13\\
};
\addplot [
color=green!50!black,
dashed
]
table[row sep=crcr]{
10 3.86308762763126e-11\\
14 2.12548867404332e-11\\
19 5.78537222740647e-13\\
26 4.9340531742801e-12\\
35 3.04423175969746e-13\\
48 5.10480547327146e-13\\
66 2.17714797496031e-13\\
91 5.27022894244452e-13\\
124 2.74336109391809e-13\\
170 2.31481507335914e-13\\
234 2.04392085260084e-13\\
320 5.29798555012396e-13\\
439 3.4530110713911e-14\\
601 4.30767295416904e-13\\
824 6.39510670230098e-14\\
1129 6.8060850313632e-14\\
1547 5.52225602115535e-13\\
2120 2.66857329587461e-13\\
2905 8.74752099259061e-13\\
3981 3.90137794929268e-13\\
};
\addlegendentry{\TRID};

\end{axis}
\end{tikzpicture}
\subfloat[][$f_2(x)=\mbox{Ai}(76x)$]{
%
\begin{tikzpicture}

\begin{axis}[%
width=\figurewidth,
height=\figureheight,
scale only axis,
xmode=log,
xmin=10,
xmax=10000,
xminorticks=true,
xlabel={N},
ymode=log,
ymin=1e-14,
ymax=100,
yminorticks=true,
ylabel={\resnorm},
axis x line*=bottom,
axis y line*=left,
legend style={draw=black,fill=white,legend cell align=left}
]
\addplot [
color=blue,
solid,
mark=o,
mark options={solid}
]
table[row sep=crcr]{
10 0.54970122713218\\
14 0.581655790721302\\
19 0.520638326749275\\
26 0.455458835326001\\
35 0.425202878027033\\
48 0.402852866848304\\
66 0.365048422384967\\
91 0.539663953091967\\
124 1.09621109398667\\
170 0.971682459437309\\
234 0.000106020575697735\\
320 7.11932868000538e-14\\
439 7.26376229661107e-14\\
601 1.57226432551035e-13\\
824 1.75214192131025e-13\\
1129 7.67261281429559e-14\\
1547 1.63679323007278e-13\\
2120 1.89705400483431e-13\\
2905 2.24046298593796e-13\\
3981 4.55144784529672e-13\\
};
\addlegendentry{T=1.1};

\addplot [
color=blue,
solid,
mark=square,
mark options={solid}
]
table[row sep=crcr]{
10 0.573766109644415\\
14 0.481276742665816\\
19 3.66953077963745\\
26 3.23971242175184\\
35 2.35295277249942\\
48 2.64919596792481\\
66 1.54914218569941\\
91 4.98059191921126\\
124 4.43821683843362\\
170 2.27839939762996\\
234 2.39256867525303\\
320 3.1189651906739\\
439 5.82235071780331e-09\\
601 5.10025010992483e-13\\
824 2.95776590010516e-13\\
1129 2.46898987931899e-13\\
1547 4.43988788718106e-13\\
2120 4.12934353855841e-13\\
2905 1.86728305723294e-13\\
3981 6.07668363781462e-13\\
};
\addlegendentry{T=2};

\addplot [
color=blue,
solid,
mark=triangle,
mark options={solid}
]
table[row sep=crcr]{
10 0.51949386747693\\
14 13.2919779901681\\
19 16.5987324867404\\
26 25.154633479787\\
35 4.58625722807862\\
48 9.17235195049557\\
66 9.26408163172894\\
91 4.3253715735936\\
124 1.81255625130645\\
170 3.7585082265731\\
234 5.35648085443182\\
320 4.51488019804559\\
439 3.58689708659182\\
601 2.83453167715959\\
824 1.96979803960127e-08\\
1129 3.17686382629492e-13\\
1547 5.17207436607386e-13\\
2120 3.77974049557386e-13\\
2905 5.17313582703118e-13\\
3981 5.4844592034921e-13\\
};
\addlegendentry{T=3.8};

\addplot [
color=blue,
solid
]
table[row sep=crcr]{
10 0.51949386747693\\
14 13.2919779901681\\
19 16.5987324867404\\
26 25.154633479787\\
35 4.58625722807862\\
48 9.17235195049557\\
66 9.26408163172894\\
91 4.3253715735936\\
124 1.81255625130645\\
170 3.7585082265731\\
234 5.35648085443182\\
320 4.51488019804559\\
439 3.58689708659182\\
601 2.83453167715959\\
824 1.96979803960127e-08\\
1129 3.17686382629492e-13\\
1547 5.17207436607386e-13\\
2120 3.77974049557386e-13\\
2905 5.17313582703118e-13\\
3981 5.4844592034921e-13\\
};
\addlegendentry{\PA};

\addplot [
color=green!50!black,
dashed,
mark=o,
mark options={solid},
forget plot
]
table[row sep=crcr]{
10 0.54929741142842\\
14 0.582265339843337\\
19 0.518648096877321\\
26 0.454232590469682\\
35 0.425377072784239\\
48 0.402861313601309\\
66 0.364810564372454\\
91 0.539555643039873\\
124 1.09530807809145\\
170 0.971053277741054\\
234 0.000109217479886174\\
320 1.10883535097138e-13\\
439 6.66134507978936e-14\\
601 1.0876028067233e-13\\
824 6.83483109100763e-14\\
1129 9.78679882887349e-14\\
1547 9.77551695009754e-14\\
2120 3.9782079751505e-13\\
2905 2.01464856444472e-13\\
3981 3.24948514195255e-13\\
};
\addplot [
color=green!50!black,
dashed,
mark=square,
mark options={solid},
forget plot
]
table[row sep=crcr]{
10 0.562655392727446\\
14 0.48108523672094\\
19 0.651372660821783\\
26 3.36395414253793\\
35 2.3963833451324\\
48 2.62929455741685\\
66 1.13873973674801\\
91 4.19091629251215\\
124 6.90591244458963\\
170 6.16846528008663\\
234 4.04644098073169\\
320 8.67787904990297\\
439 6.98191107785943e-09\\
601 2.26207943922867e-13\\
824 3.24490438592728e-13\\
1129 8.45712389976179e-14\\
1547 1.34572911416789e-13\\
2120 1.03667107529034e-13\\
2905 2.77181058109711e-13\\
3981 1.17253429631273e-13\\
};
\addplot [
color=green!50!black,
dashed,
mark=triangle,
mark options={solid},
forget plot
]
table[row sep=crcr]{
10 8.73483450106512\\
14 12.8636722779264\\
19 23.8848511479506\\
26 9.78007842950505\\
35 5.05714549706016\\
48 2.52899543504233\\
66 9.66176220165118\\
91 4.36672927661848\\
124 4.14177613790697\\
170 3.54058396243311\\
234 5.58058610897885\\
320 5.78235721199833\\
439 2.3059753893085\\
601 4.25984698679342\\
824 1.2435814778538e-08\\
1129 1.23207111666045e-13\\
1547 2.85826919970679e-13\\
2120 2.30649076122025e-13\\
2905 4.49645084805747e-14\\
3981 1.17100773760721e-13\\
};
\addplot [
color=green!50!black,
dashed
]
table[row sep=crcr]{
10 8.73483450106512\\
14 12.8636722779264\\
19 23.8848511479506\\
26 9.78007842950505\\
35 5.05714549706016\\
48 2.52899543504233\\
66 9.66176220165118\\
91 4.36672927661848\\
124 4.14177613790697\\
170 3.54058396243311\\
234 5.58058610897885\\
320 5.78235721199833\\
439 2.3059753893085\\
601 4.25984698679342\\
824 1.2435814778538e-08\\
1129 1.23207111666045e-13\\
1547 2.85826919970679e-13\\
2120 2.30649076122025e-13\\
2905 4.49645084805747e-14\\
3981 1.17100773760721e-13\\
};
\addlegendentry{\TRID};

\end{axis}
\end{tikzpicture}
\subfloat[][$f_3(x)=\frac{1}{1.1-x^2}$]{
%
\begin{tikzpicture}

\begin{axis}[%
width=\figurewidth,
height=\figureheight,
scale only axis,
xmode=log,
xmin=10,
xmax=10000,
xminorticks=true,
xlabel={N},
ymode=log,
ymin=1e-12,
ymax=100,
yminorticks=true,
ylabel={\resnorm},
axis x line*=bottom,
axis y line*=left,
legend style={draw=black,fill=white,legend cell align=left}
]
\addplot [
color=blue,
solid,
mark=o,
mark options={solid}
]
table[row sep=crcr]{
10 0.537926350089028\\
14 0.223902816229885\\
19 0.0712760346641269\\
26 0.01513129901988\\
35 0.00217181507114933\\
48 0.000118069882002118\\
66 2.49440078195283e-06\\
91 1.38997897678373e-08\\
124 1.30959854182309e-10\\
170 1.59228397998421e-11\\
234 3.54087593747808e-12\\
320 8.31440195744633e-12\\
439 3.07119065106656e-12\\
601 8.29658652640175e-12\\
824 1.28630281364137e-11\\
1129 1.28598104650666e-11\\
1547 9.97416967386005e-12\\
2120 1.0174749069887e-11\\
2905 3.47097727970203e-11\\
3981 2.49360582068467e-11\\
};
\addlegendentry{T=1.1};

\addplot [
color=blue,
solid,
mark=square,
mark options={solid}
]
table[row sep=crcr]{
10 5.52283292566179\\
14 0.0286230960979303\\
19 0.00515580109469619\\
26 0.00130353715256538\\
35 0.000194517848329052\\
48 2.97463257941987e-05\\
66 2.68515007837142e-06\\
91 2.5036977322456e-07\\
124 1.12539393712343e-08\\
170 8.52391495999432e-10\\
234 9.07161312431661e-11\\
320 2.3935730479574e-11\\
439 2.01582234204795e-11\\
601 2.60963572674651e-11\\
824 2.95429664381678e-11\\
1129 2.4964169732576e-11\\
1547 3.68550269321552e-11\\
2120 3.50584666148501e-11\\
2905 3.68783570402439e-11\\
3981 6.20993429066939e-11\\
};
\addlegendentry{T=2};

\addplot [
color=blue,
solid,
mark=triangle,
mark options={solid}
]
table[row sep=crcr]{
10 6.44705884271488\\
14 0.113157021615259\\
19 0.0498599110532642\\
26 0.0167424330993279\\
35 0.00507793390995338\\
48 0.00116377693323929\\
66 0.000186869071918132\\
91 2.71712834305145e-05\\
124 3.74612879986318e-06\\
170 1.97802481450449e-07\\
234 1.65509033709335e-08\\
320 1.08705438835279e-09\\
439 1.70673934239586e-10\\
601 2.97301443744406e-11\\
824 2.80779020912906e-11\\
1129 3.68338995600398e-11\\
1547 2.73053285209489e-11\\
2120 2.67502778001131e-11\\
2905 5.52307887304397e-11\\
3981 3.62236825727418e-11\\
};
\addlegendentry{T=3.8};

\addplot [
color=blue,
solid
]
table[row sep=crcr]{
10 6.44705884271488\\
14 0.113157021615259\\
19 0.0498599110532642\\
26 0.0167424330993279\\
35 0.00507793390995338\\
48 0.00116377693323929\\
66 0.000186869071918132\\
91 2.71712834305145e-05\\
124 3.74612879986318e-06\\
170 1.97802481450449e-07\\
234 1.65509033709335e-08\\
320 1.08705438835279e-09\\
439 1.70673934239586e-10\\
601 2.97301443744406e-11\\
824 2.80779020912906e-11\\
1129 3.68338995600398e-11\\
1547 2.73053285209489e-11\\
2120 2.67502778001131e-11\\
2905 5.52307887304397e-11\\
3981 3.62236825727418e-11\\
};
\addlegendentry{\PA};

\addplot [
color=green!50!black,
dashed,
mark=o,
mark options={solid},
forget plot
]
table[row sep=crcr]{
10 0.53026447407993\\
14 0.223417250798083\\
19 0.0704007097701211\\
26 0.0151223469022703\\
35 0.00217175753689602\\
48 0.000118092998938835\\
66 2.4952180552873e-06\\
91 1.38998856869466e-08\\
124 1.30784277786389e-10\\
170 1.46656207991001e-11\\
234 1.39977078250855e-12\\
320 4.0110385419507e-12\\
439 3.80501303732816e-12\\
601 2.245325377476e-12\\
824 5.84429487958513e-12\\
1129 5.81604473557026e-12\\
1547 7.95630774199399e-12\\
2120 1.58362531480832e-11\\
2905 1.11597292021485e-11\\
3981 3.25322033141632e-11\\
};
\addplot [
color=green!50!black,
dashed,
mark=square,
mark options={solid},
forget plot
]
table[row sep=crcr]{
10 0.121067242734733\\
14 0.0286233141708507\\
19 0.00702139591767525\\
26 0.0012984270725723\\
35 0.000191271920021892\\
48 2.94503649900256e-05\\
66 2.67883488377255e-06\\
91 1.27912233718188e-07\\
124 1.05326520839012e-08\\
170 5.94020832603354e-10\\
234 5.34434720199203e-11\\
320 1.2835954555533e-11\\
439 3.44968683575793e-12\\
601 3.70015129649036e-12\\
824 5.06794700326315e-12\\
1129 3.29869546020276e-12\\
1547 6.80522343365575e-12\\
2120 1.05338018064236e-12\\
2905 5.01643466753412e-12\\
3981 3.19211555738807e-12\\
};
\addplot [
color=green!50!black,
dashed,
mark=triangle,
mark options={solid},
forget plot
]
table[row sep=crcr]{
10 0.287338600782316\\
14 0.135073285026302\\
19 0.0498704681342631\\
26 0.0207190270341209\\
35 0.0049075232176628\\
48 0.00115526442874447\\
66 0.000182602925213395\\
91 2.6547210740844e-05\\
124 2.16499754661357e-06\\
170 1.61722546465996e-07\\
234 7.68077690477527e-09\\
320 7.48952899699308e-10\\
439 3.0718538853007e-11\\
601 1.64934751015163e-11\\
824 3.5687054653199e-12\\
1129 2.46203900724532e-12\\
1547 6.79279161457602e-12\\
2120 2.26946206105211e-12\\
2905 8.54079646070921e-12\\
3981 3.852972753947e-12\\
};
\addplot [
color=green!50!black,
dashed
]
table[row sep=crcr]{
10 0.287338600782316\\
14 0.135073285026302\\
19 0.0498704681342631\\
26 0.0207190270341209\\
35 0.0049075232176628\\
48 0.00115526442874447\\
66 0.000182602925213395\\
91 2.6547210740844e-05\\
124 2.16499754661357e-06\\
170 1.61722546465996e-07\\
234 7.68077690477527e-09\\
320 7.48952899699308e-10\\
439 3.0718538853007e-11\\
601 1.64934751015163e-11\\
824 3.5687054653199e-12\\
1129 2.46203900724532e-12\\
1547 6.79279161457602e-12\\
2120 2.26946206105211e-12\\
2905 8.54079646070921e-12\\
3981 3.852972753947e-12\\
};
\addlegendentry{\TRID};

\end{axis}
\end{tikzpicture}
\subfloat[][$f_4(x)=|x|$]{
%
\begin{tikzpicture}

\begin{axis}[%
width=\figurewidth,
height=\figureheight,
scale only axis,
xmode=log,
xmin=10,
xmax=10000,
xminorticks=true,
xlabel={N},
ymode=log,
ymin=1e-05,
ymax=10,
yminorticks=true,
ylabel={\resnorm},
axis x line*=bottom,
axis y line*=left,
legend style={draw=black,fill=white,legend cell align=left}
]
\addplot [
color=blue,
solid,
mark=o,
mark options={solid}
]
table[row sep=crcr]{
10 0.0197470650708067\\
14 0.0143428288223777\\
19 0.0106962135930397\\
26 0.00787481615424329\\
35 0.00586055145322476\\
48 0.00429647596638067\\
66 0.00313616304640601\\
91 0.00227983995649175\\
124 0.00167824574242908\\
170 0.00122917730918889\\
234 0.000894933211999435\\
320 0.000656245367664946\\
439 0.000479065371678134\\
601 0.000350409397207894\\
824 0.000255799930997966\\
1129 0.000186835676141707\\
1547 0.000136419729542238\\
2120 9.95853974297705e-05\\
2905 7.2696159376295e-05\\
3981 5.30565386778334e-05\\
};
\addlegendentry{T=1.1};

\addplot [
color=blue,
solid,
mark=square,
mark options={solid}
]
table[row sep=crcr]{
10 0.377967764878688\\
14 0.0260120567639101\\
19 0.0873421669473711\\
26 0.0382032504519318\\
35 0.040965453812767\\
48 0.0179697434406193\\
66 0.0120563828828961\\
91 0.00734646916817689\\
124 0.00425416540991189\\
170 0.00247965536124108\\
234 0.00156538077564455\\
320 0.00115869660901575\\
439 0.000853286825614024\\
601 0.000627497972364188\\
824 0.000460108731380228\\
1129 0.00033708705981806\\
1547 0.000246628289392743\\
2120 0.000180301499969088\\
2905 0.000131771397843863\\
3981 9.62531776525416e-05\\
};
\addlegendentry{T=2};

\addplot [
color=blue,
solid,
mark=triangle,
mark options={solid}
]
table[row sep=crcr]{
10 0.426879428781529\\
14 2.27264660001063\\
19 0.865143594749764\\
26 0.627657340882538\\
35 0.143244052396592\\
48 0.0791941290758613\\
66 0.0592053974212004\\
91 0.0213497371718372\\
124 0.0104745782367025\\
170 0.0064283810463051\\
234 0.0032066720896278\\
320 0.00259604893341151\\
439 0.00163709558691937\\
601 0.00116513183213321\\
824 0.000860005962725309\\
1129 0.00063296723531379\\
1547 0.000464533189793157\\
2120 0.000340455672536963\\
2905 0.000249229650713132\\
3981 0.000182276355361832\\
};
\addlegendentry{T=3.8};

\addplot [
color=blue,
solid
]
table[row sep=crcr]{
10 0.426879428781529\\
14 2.27264660001063\\
19 0.865143594749764\\
26 0.627657340882538\\
35 0.143244052396592\\
48 0.0791941290758613\\
66 0.0592053974212004\\
91 0.0213497371718372\\
124 0.0104745782367025\\
170 0.0064283810463051\\
234 0.0032066720896278\\
320 0.00259604893341151\\
439 0.00163709558691937\\
601 0.00116513183213321\\
824 0.000860005962725309\\
1129 0.00063296723531379\\
1547 0.000464533189793157\\
2120 0.000340455672536963\\
2905 0.000249229650713132\\
3981 0.000182276355361832\\
};
\addlegendentry{\PA};

\addplot [
color=green!50!black,
dashed,
mark=o,
mark options={solid},
forget plot
]
table[row sep=crcr]{
10 0.0197934174119321\\
14 0.0143297621168931\\
19 0.0106759661457762\\
26 0.00786302550882495\\
35 0.00586141414018967\\
48 0.0043004371258022\\
66 0.00313670231399513\\
91 0.00227986060816864\\
124 0.00167825531751005\\
170 0.00122917296474423\\
234 0.000894949765581114\\
320 0.00065615922852576\\
439 0.000479064019743718\\
601 0.000350347413448339\\
824 0.000255800304642138\\
1129 0.000186816557942157\\
1547 0.000136410654088587\\
2120 9.9581136855654e-05\\
2905 7.269379865907e-05\\
3981 5.30543124078653e-05\\
};
\addplot [
color=green!50!black,
dashed,
mark=square,
mark options={solid},
forget plot
]
table[row sep=crcr]{
10 0.0232200715395194\\
14 0.0259833851189904\\
19 0.033664556940751\\
26 0.0372571010864847\\
35 0.0392800774187735\\
48 0.0168874967382988\\
66 0.0104349400277887\\
91 0.0084469050782295\\
124 0.00388750746974605\\
170 0.0023356960305153\\
234 0.00156444044273221\\
320 0.00115861296653816\\
439 0.000851986600967171\\
601 0.000626831987874063\\
824 0.000459786271644322\\
1129 0.000336761352288694\\
1547 0.000246467924028703\\
2120 0.000180249160929474\\
2905 0.000131725001576804\\
3981 9.62287996366362e-05\\
};
\addplot [
color=green!50!black,
dashed,
mark=triangle,
mark options={solid},
forget plot
]
table[row sep=crcr]{
10 0.541937456065889\\
14 0.382863386467619\\
19 0.837104896598094\\
26 0.182565305578447\\
35 0.199788092894297\\
48 0.0744495551018857\\
66 0.0599636975584873\\
91 0.0213143749721116\\
124 0.0156449487057112\\
170 0.00871420220597737\\
234 0.00559585586085717\\
320 0.00237847943023523\\
439 0.00191969710275161\\
601 0.00116488073817174\\
824 0.000858565365158672\\
1129 0.000632106934995223\\
1547 0.000464133117312333\\
2120 0.000340163373912245\\
2905 0.000249020207514575\\
3981 0.000182173964777095\\
};
\addplot [
color=green!50!black,
dashed
]
table[row sep=crcr]{
10 0.541937456065889\\
14 0.382863386467619\\
19 0.837104896598094\\
26 0.182565305578447\\
35 0.199788092894297\\
48 0.0744495551018857\\
66 0.0599636975584873\\
91 0.0213143749721116\\
124 0.0156449487057112\\
170 0.00871420220597737\\
234 0.00559585586085717\\
320 0.00237847943023523\\
439 0.00191969710275161\\
601 0.00116488073817174\\
824 0.000858565365158672\\
1129 0.000632106934995223\\
1547 0.000464133117312333\\
2120 0.000340163373912245\\
2905 0.000249020207514575\\
3981 0.000182173964777095\\
};
\addlegendentry{\TRID};

\end{axis}
\end{tikzpicture}
\caption{The $L_\infty$ norm of the error, computed by oversampling the solution
  by a factor 10, for the various testfunctions}
\end{center}
\end{figure}
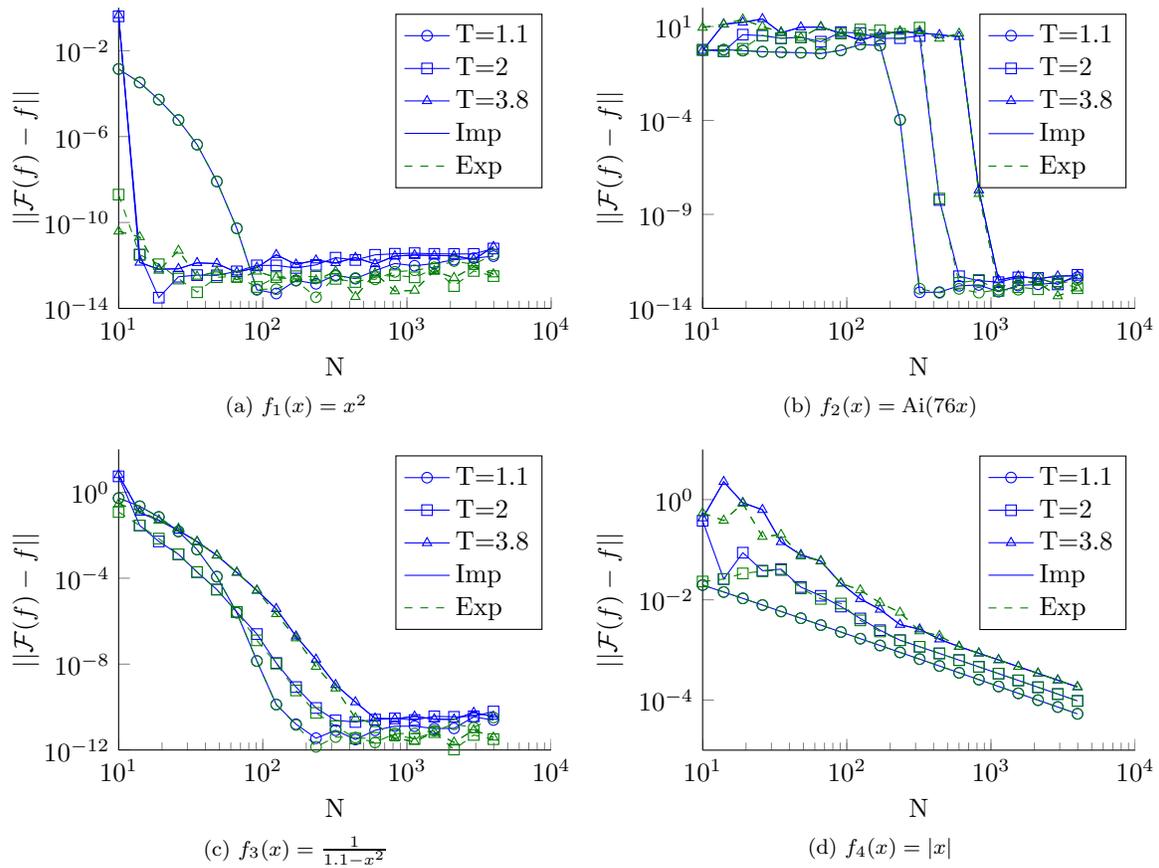


\subsection{Robustness}
To ensure there is no accumulation of error for very large $N$, Figure \ref{fig:robustnessaccuracy} shows the continuation of the accuracy experiment for $N$ up to $10^5$. No error accumulation is visible, the error fluctuates around the cutoff threshold. 

Figure \ref{fig:robust1} shows the challenging problem of approximating an oscillatory function with a frequency that increases with $N$, $f(x)=\sin(Nx)$. The error for the Algorithm using 
$T=1.1$ and $T=2$ stays close to machine precision, if maybe slightly increasing. However, for $T=3.8$, the maximum frequency in the Fourier basis is lower than the frequency of the signal for every $N$, so there cannot be any convergence. This experiment shows that even a very oscillatory signal such as $f(x)=\sin(10^5x)$ can be accurately approximated, as long as there are sufficient degrees of freedom for the chosen value of $T$.
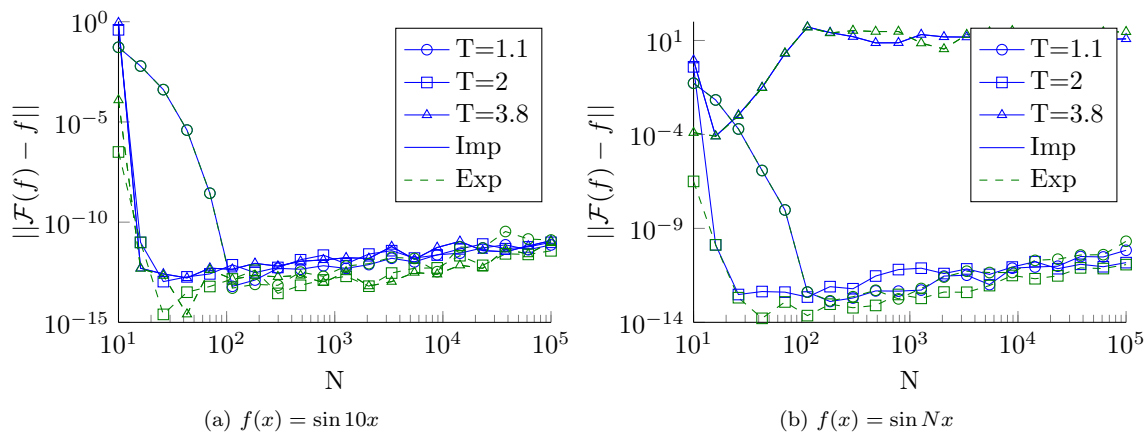
\begin{figure}[htbp]
\begin{center}
\subfloat[][$f(x)=\sin{10x}$]{
%
\begin{tikzpicture}

\begin{axis}[%
width=\figurewidth,
height=\figureheight,
scale only axis,
xmode=log,
xmin=10,
xmax=100000,
xminorticks=true,
xlabel={N},
ymode=log,
ymin=1e-15,
ymax=1,
yminorticks=true,
ylabel={\resnorm},
axis x line*=bottom,
axis y line*=left,
legend style={draw=black,fill=white,legend cell align=left}
]
\addplot [
color=blue,
solid,
mark=o,
mark options={solid}
]
table[row sep=crcr]{
10 0.0531807358765876\\
16 0.00615903774264162\\
26 0.000408936993750461\\
43 3.91862243626299e-06\\
70 2.77125671265108e-09\\
113 6.28646257640212e-14\\
183 1.2935326550481e-13\\
298 4.9581894891763e-13\\
483 4.46995796966386e-13\\
785 6.6884549450764e-13\\
1274 4.94134166761627e-13\\
2069 7.59556596728928e-13\\
3360 1.573009806688e-12\\
5456 1.04149781814746e-12\\
8859 2.3448578201259e-12\\
14384 2.8362326221869e-12\\
23357 5.19041899139554e-12\\
37927 7.39195919850547e-12\\
61585 4.21775319917343e-12\\
100000 7.29727921018788e-12\\
};
\addlegendentry{T=1.1};

\addplot [
color=blue,
solid,
mark=square,
mark options={solid}
]
table[row sep=crcr]{
10 0.394587476907683\\
16 9.77146144827401e-12\\
26 1.07963706387514e-13\\
43 1.86134350634786e-13\\
70 2.61404733795545e-13\\
113 7.44512553915653e-13\\
183 3.01670361614247e-13\\
298 5.49760392670968e-13\\
483 1.32617912812656e-12\\
785 2.25734352396141e-12\\
1274 8.36624479880461e-13\\
2069 2.55679851986032e-12\\
3360 3.77192919042289e-12\\
5456 1.61631820942905e-12\\
8859 2.28640606119951e-12\\
14384 4.62720215938449e-12\\
23357 8.08728933560562e-12\\
37927 4.73134616211603e-12\\
61585 5.83174477941353e-12\\
100000 9.84832470836332e-12\\
};
\addlegendentry{T=2};

\addplot [
color=blue,
solid,
mark=triangle,
mark options={solid}
]
table[row sep=crcr]{
10 0.890883464067299\\
16 5.0220290571427e-13\\
26 2.60682820457813e-13\\
43 1.65495010465939e-13\\
70 5.06635548384291e-13\\
113 4.45400744741175e-13\\
183 8.77537893222681e-13\\
298 6.40307576299907e-13\\
483 1.11795782898876e-12\\
785 1.31017606537133e-12\\
1274 1.66867983752734e-12\\
2069 1.55495930722943e-12\\
3360 6.06050382964229e-12\\
5456 1.39301388526079e-12\\
8859 5.44755032341975e-12\\
14384 1.09977581866191e-11\\
23357 3.80374718742173e-12\\
37927 3.33481732433297e-12\\
61585 6.45357294558205e-12\\
100000 1.19352741638465e-11\\
};
\addlegendentry{T=3.8};

\addplot [
color=blue,
solid
]
table[row sep=crcr]{
10 0.890883464067299\\
16 5.0220290571427e-13\\
26 2.60682820457813e-13\\
43 1.65495010465939e-13\\
70 5.06635548384291e-13\\
113 4.45400744741175e-13\\
183 8.77537893222681e-13\\
298 6.40307576299907e-13\\
483 1.11795782898876e-12\\
785 1.31017606537133e-12\\
1274 1.66867983752734e-12\\
2069 1.55495930722943e-12\\
3360 6.06050382964229e-12\\
5456 1.39301388526079e-12\\
8859 5.44755032341975e-12\\
14384 1.09977581866191e-11\\
23357 3.80374718742173e-12\\
37927 3.33481732433297e-12\\
61585 6.45357294558205e-12\\
100000 1.19352741638465e-11\\
};
\addlegendentry{\PA};

\addplot [
color=green!50!black,
dashed,
mark=o,
mark options={solid},
forget plot
]
table[row sep=crcr]{
10 0.0531571953324746\\
16 0.00609586540176621\\
26 0.00040915451792517\\
43 3.91856062476048e-06\\
70 2.77255352010287e-09\\
113 5.11812952922868e-14\\
183 7.87227529165893e-14\\
298 7.06103622149711e-14\\
483 3.23186230708111e-13\\
785 1.97730750135583e-13\\
1274 6.45372651180028e-13\\
2069 8.47878696711571e-13\\
3360 1.74772224789333e-12\\
5456 1.54710588444614e-12\\
8859 3.37731318144009e-13\\
14384 5.77661847087874e-12\\
23357 5.08228428177776e-12\\
37927 3.41087279768158e-11\\
61585 1.44500032192032e-11\\
100000 1.28662482700631e-11\\
};
\addplot [
color=green!50!black,
dashed,
mark=square,
mark options={solid},
forget plot
]
table[row sep=crcr]{
10 3.16895077179247e-07\\
16 9.77223857413704e-12\\
26 2.50128616195066e-15\\
43 3.19748509341321e-14\\
70 6.16173783685267e-14\\
113 1.46438439108877e-13\\
183 3.99125203351682e-13\\
298 2.78667878587246e-14\\
483 7.07212219343456e-14\\
785 1.30784315884194e-13\\
1274 1.98618936086366e-13\\
2069 6.37273780017267e-14\\
3360 2.88325021850797e-13\\
5456 5.03042053374987e-13\\
8859 5.23026066956652e-13\\
14384 2.11886065216227e-12\\
23357 6.99552871173874e-13\\
37927 2.62545563721336e-12\\
61585 2.45825612569042e-12\\
100000 3.70936632293451e-12\\
};
\addplot [
color=green!50!black,
dashed,
mark=triangle,
mark options={solid},
forget plot
]
table[row sep=crcr]{
10 0.000123881310877428\\
16 4.8594462384163e-13\\
26 2.15605336981721e-13\\
43 2.55512808075999e-15\\
70 4.20330528600909e-13\\
113 1.42552636547861e-13\\
183 2.15161459043172e-13\\
298 1.88627271478793e-13\\
483 2.19380231004398e-13\\
785 9.62598430494788e-14\\
1274 3.25517420988359e-13\\
2069 6.4727717653868e-14\\
3360 1.08357791478642e-13\\
5456 3.07977983780254e-13\\
8859 2.51473920345437e-13\\
14384 7.11347605711325e-13\\
23357 4.97519899462676e-13\\
37927 4.46565023881245e-12\\
61585 2.55910141827839e-12\\
100000 1.06061008625841e-11\\
};
\addplot [
color=green!50!black,
dashed
]
table[row sep=crcr]{
10 0.000123881310877428\\
16 4.8594462384163e-13\\
26 2.15605336981721e-13\\
43 2.55512808075999e-15\\
70 4.20330528600909e-13\\
113 1.42552636547861e-13\\
183 2.15161459043172e-13\\
298 1.88627271478793e-13\\
483 2.19380231004398e-13\\
785 9.62598430494788e-14\\
1274 3.25517420988359e-13\\
2069 6.4727717653868e-14\\
3360 1.08357791478642e-13\\
5456 3.07977983780254e-13\\
8859 2.51473920345437e-13\\
14384 7.11347605711325e-13\\
23357 4.97519899462676e-13\\
37927 4.46565023881245e-12\\
61585 2.55910141827839e-12\\
100000 1.06061008625841e-11\\
};
\addlegendentry{\TRID};

\end{axis}
\end{tikzpicture}
\subfloat[][$f(x)=\sin{Nx}$]{
%
\begin{tikzpicture}

\begin{axis}[%
width=\figurewidth,
height=\figureheight,
scale only axis,
xmode=log,
xmin=10,
xmax=100000,
xminorticks=true,
xlabel={N},
ymode=log,
ymin=1e-14,
ymax=100,
yminorticks=true,
ylabel={\resnorm},
axis x line*=bottom,
axis y line*=left,
legend style={draw=black,fill=white,legend cell align=left}
]
\addplot [
color=blue,
solid,
mark=o,
mark options={solid}
]
table[row sep=crcr]{
10 0.0530386552097869\\
16 0.00685408849179682\\
26 0.000195472192117954\\
43 1.2010836245728e-06\\
70 9.56151224862387e-09\\
113 4.05484369012485e-13\\
183 1.2847614738395e-13\\
298 1.96862984801438e-13\\
483 4.69721007924619e-13\\
785 4.57633623441084e-13\\
1274 5.1875981920638e-13\\
2069 2.89954046640475e-12\\
3360 3.22413724628883e-12\\
5456 1.02048935806763e-12\\
8859 7.62088177331593e-12\\
14384 8.80208046956841e-12\\
23357 9.69428568415752e-12\\
37927 3.18603562738825e-11\\
61585 3.01272730449565e-11\\
100000 6.81459919807832e-11\\
};
\addlegendentry{T=1.1};

\addplot [
color=blue,
solid,
mark=square,
mark options={solid}
]
table[row sep=crcr]{
10 0.394587476907683\\
16 1.32759248041185e-10\\
26 3.01974152665933e-13\\
43 4.35143220241958e-13\\
70 4.04013459897274e-13\\
113 2.21446032330708e-13\\
183 7.92045484855036e-13\\
298 6.12345581189677e-13\\
483 2.91113132562722e-12\\
785 6.413298856007e-12\\
1274 7.60986486251952e-12\\
2069 3.94192749762351e-12\\
3360 6.9653927507314e-12\\
5456 3.56306584195921e-12\\
8859 8.74613314737076e-12\\
14384 1.69173489907366e-11\\
23357 7.16427281880956e-12\\
37927 1.22587859583703e-11\\
61585 9.10470047092167e-12\\
100000 1.50567739168403e-11\\
};
\addlegendentry{T=2};

\addplot [
color=blue,
solid,
mark=triangle,
mark options={solid}
]
table[row sep=crcr]{
10 0.890883464067299\\
16 8.06528199572999e-05\\
26 0.00108391652831052\\
43 0.0314829774708582\\
70 2.00663861273711\\
113 52.2496778119192\\
183 26.3713342445533\\
298 15.988864663458\\
483 7.46253670039337\\
785 7.5510112879932\\
1274 20.6237453707205\\
2069 15.502439373191\\
3360 15.1711466719705\\
5456 13.5516344915609\\
8859 13.1831973836785\\
14384 14.6612579776143\\
23357 14.635494959559\\
37927 6.15292430753356\\
61585 11.4869261123443\\
100000 12.1561934228677\\
};
\addlegendentry{T=3.8};

\addplot [
color=blue,
solid
]
table[row sep=crcr]{
10 0.890883464067299\\
16 8.06528199572999e-05\\
26 0.00108391652831052\\
43 0.0314829774708582\\
70 2.00663861273711\\
113 52.2496778119192\\
183 26.3713342445533\\
298 15.988864663458\\
483 7.46253670039337\\
785 7.5510112879932\\
1274 20.6237453707205\\
2069 15.502439373191\\
3360 15.1711466719705\\
5456 13.5516344915609\\
8859 13.1831973836785\\
14384 14.6612579776143\\
23357 14.635494959559\\
37927 6.15292430753356\\
61585 11.4869261123443\\
100000 12.1561934228677\\
};
\addlegendentry{\PA};

\addplot [
color=green!50!black,
dashed,
mark=o,
mark options={solid},
forget plot
]
table[row sep=crcr]{
10 0.0531571953324746\\
16 0.00682344887121991\\
26 0.000194905969427728\\
43 1.20108923917783e-06\\
70 9.56152024222945e-09\\
113 4.0750767887964e-13\\
183 1.39778320158436e-13\\
298 2.30982028902967e-13\\
483 5.09814511416913e-13\\
785 1.76747515326387e-13\\
1274 6.34061793374897e-13\\
2069 2.56872410008618e-12\\
3360 5.25324746324052e-12\\
5456 4.46598759153368e-12\\
8859 4.47108254310346e-12\\
14384 1.98583526767655e-11\\
23357 2.29491317716991e-11\\
37927 3.82182063975498e-11\\
61585 3.98686167035112e-11\\
100000 2.05746634960754e-10\\
};
\addplot [
color=green!50!black,
dashed,
mark=square,
mark options={solid},
forget plot
]
table[row sep=crcr]{
10 3.1689507734578e-07\\
16 1.32758498639196e-10\\
26 2.04725390033701e-13\\
43 1.63208484036017e-14\\
70 1.21347741916985e-13\\
113 2.27605023535536e-14\\
183 9.23705570695775e-14\\
298 5.98410814183373e-14\\
483 7.9381209851236e-14\\
785 3.0464520084054e-13\\
1274 1.89182003691724e-13\\
2069 4.09006235938557e-13\\
3360 3.97317596216789e-13\\
5456 9.02167325112403e-13\\
8859 3.11478638718249e-12\\
14384 2.08999484653643e-12\\
23357 2.97170621447538e-12\\
37927 7.90434389993294e-12\\
61585 7.27424025666444e-12\\
100000 1.17917620112781e-11\\
};
\addplot [
color=green!50!black,
dashed,
mark=triangle,
mark options={solid},
forget plot
]
table[row sep=crcr]{
10 0.000123881320919172\\
16 8.04204096807355e-05\\
26 0.000983243658530652\\
43 0.0301628100224231\\
70 2.1246536071925\\
113 50.0054269537877\\
183 24.9308008210204\\
298 33.5730427618059\\
483 29.8154074024589\\
785 29.9980623829448\\
1274 7.23951403063408\\
2069 3.52012216814226\\
3360 20.3791822382663\\
5456 27.7926814984943\\
8859 32.4335365748197\\
14384 1.25882289424092\\
23357 17.857886796434\\
37927 3.59632167568374\\
61585 28.1028417716474\\
100000 28.9630815167909\\
};
\addplot [
color=green!50!black,
dashed
]
table[row sep=crcr]{
10 0.000123881320919172\\
16 8.04204096807355e-05\\
26 0.000983243658530652\\
43 0.0301628100224231\\
70 2.1246536071925\\
113 50.0054269537877\\
183 24.9308008210204\\
298 33.5730427618059\\
483 29.8154074024589\\
785 29.9980623829448\\
1274 7.23951403063408\\
2069 3.52012216814226\\
3360 20.3791822382663\\
5456 27.7926814984943\\
8859 32.4335365748197\\
14384 1.25882289424092\\
23357 17.857886796434\\
37927 3.59632167568374\\
61585 28.1028417716474\\
100000 28.9630815167909\\
};
\addlegendentry{\TRID};

\end{axis}
\end{tikzpicture}
\caption{Illustration of the robustness of FE approximations for large $N$.}
\label{fig:robustnessaccuracy}
\end{center}
\end{figure}

\bibliography{mrabbrev,library}
\bibliographystyle{abbrv}
\end{document}